\numberwithin{equation}{section}
\theoremstyle{plain}
\newtheorem{theorem}{Theorem}[section]
\newtheorem*{theorem*}{Theorem}
\theoremstyle{plain}
\newtheorem{corollary}[theorem]{Corollary}
\theoremstyle{plain}
\newtheorem{lemma}[theorem]{Lemma}
\theoremstyle{plain}
\newtheorem{proposition}[theorem]{Proposition}
\newtheorem*{proposition*}{Proposition}
\theoremstyle{definition}
\theoremstyle{remark}
\theoremstyle{remark}
\theoremstyle{definition}
\theoremstyle{plain}
\theoremstyle{definition}
\providecommand{\norm}[1]{\left\lVert #1 \right\rVert}
\providecommand{\abs}[1]{\left\lvert #1 \right\rvert}
\newcommand{\R}{\mathbb{R}}
\newcommand{\Z}{\mathbb{Z}}
\newcommand{\N}{\mathbb{N}}
\newcommand{\Lt}[1][d]{L^2(\R^{#1})}
\newcommand{\G}{\mathcal{G}}
\newcommand{\F}{\mathcal{F}}
\newcommand{\indicator}{\raisebox{2pt}{$\chi$}}
\renewcommand{\l}{\lambda}
\renewcommand{\L}{\Lambda}
\newcommandtwoopt{\xarrow}[2][0.5cm][0]{\mathrel{\rotatebox[origin=c]{#2}{$\xrightarrow{\rule{#1}{0pt}}$}}}
\DeclareMathOperator{\sech}{sech}
\DeclareMathOperator{\csch}{csch}
\DeclareMathOperator{\arccosh}{arccosh}
\DeclareMathOperator{\arcsinh}{arcsinh}
\newcommand{\suppfoot}{The full computation is provided in the supplement.}
\begin{document}

\title[Gabor frame bound optimizations]{Gabor frame bound optimizations}

\author[M.\ Faulhuber]{Markus Faulhuber \orcidlink{0000-0002-7576-5724}
}
\email{markus.faulhuber@univie.ac.at}
\author[I.\ Shafkulovska]{Irina Shafkulovska \orcidlink{0000-0003-1675-3122}
}
\email{irina.shafkulovska@univie.ac.at}
\address{NuHAG, Faculty of Mathematics, University of Vienna \newline Oskar-Morgenstern-Platz 1, 1090 Vienna, Austria}

\thanks{Markus Faulhuber was supported by the Austrian Science Fund (FWF) projects TAI6 and P33217. Irina Shafkulovska was supported by the Austrian Science Fund (FWF) project P33217. Parts of this work were established while Irina Shafkulovska was associated with the Acoustics Research Institute (ARI) at the Austrian Academy of Sciences (\"OAW)}

\begin{abstract}
	We study sharp frame bounds of Gabor systems over rectangular lattices for different windows and integer oversampling rate. In some cases we obtain optimality results for the square lattice, while in other cases the lattices optimizing the frame bounds and the condition number are rectangular lattices which are different for the respective quantities. Also, in some cases optimal lattices do not exist at all and a degenerated system is optimal.
\end{abstract}

\subjclass[2020]{primary: 42C15; secondary: 26A06, 33B10}

\keywords{exponential functions, frame bounds, Gabor frame, hyperbolic functions, lattice}

\vspace*{7pt}
\maketitle

\section{Introduction}\label{sec_Intro}

We aim to find extremal lattices for the spectral bounds of Gabor systems with specific windows. The quantities which we aim to optimize are the lower and upper frame bound as well as their ratio, which is the condition number of the associated frame operator. We study the cases provided in \cite{Jan96}, where Janssen computed sharp spectral bounds for Gabor frames over rectangular lattices of the form $a \Z \times b \Z$ for several different window functions and $(a b)^{-1} \in \N$, and we will build upon this work. Our findings are for rectangular lattices of integer density and for different windows. The main results can be summed up as follows:

\medskip

\begin{center}
\fbox{
	\parbox{.9\textwidth}{
		$\bullet$ \textit{Hyperbolic secant}: the square lattice optimizes the lower and upper frame bound simultaneously and, hence, also the condition number.
		
		\medskip
		
		$\bullet$ \textit{Cut-off exponentials}: we find that a lattice optimizing the frame bounds does not exist for cut-off exponentials supported on $[0,1/b]$. For support $[0,2/b]$, optimal lattices may or may not exist, depending on the lattice density and which quantity we seek to optimize. If they exist, then they depend on the density and on the decay parameter of the exponential.
		
		\medskip
		
		$\bullet$ \textit{One-sided exponential}: the optimizing lattices for the condition number and the frame bounds are different from each other and depend on the (over)sampling rate.
		
		\medskip
		
		$\bullet$ \textit{Two-sided exponentials}: each quantity has a unique optimizer. They depend on the (over)sampling rate and they all differ from each other.
	}
}
\end{center}

Until now, Gaussians were the only family of window functions for which optimality results have been known \cite{BetFauSte21}, \cite{Faulhuber_Hexagonal_2018}, \cite{FauSte17}. We see that many of the situations are quite different from what we know about the Gaussian case. Therefore, our results emphasize that in general we may not expect that an optimal lattice exists at all (for the frame bounds or the condition number) and that a lattice optimizing one quantity does not necessarily optimize the others. Indeed, assume that the lower frame bound $A$ and the upper frame bound $B$ each have a unique critical point with respect to one free lattice parameter (others are fixed), so there exists a unique optimizing lattice. Critical points of the condition number $B/A$ need to fulfill
\begin{equation}\label{eq:A'=B'}
	\left( \frac{B}{A} \right)' = 0
	\quad \Longleftrightarrow \quad
	\frac{A'}{A} = \frac{B'}{B},
\end{equation}
where the differentiation is with respect to the free parameter.
If the critical points of $A$ and $B$ do not coincide, then no critical point of $B/A$ can coincide with either of the other two. As can be seen from the example of the two-sided exponential, this is not necessarily a pathology of the window not being in the modulation space $M^1(\R)$, also known as Feichtinger's algebra $S_0(\R)$. However, the situation for the hyperbolic secant in the rectangular case is comparable to the Gaussian case (see also \cite{JanStr02} for a connection).

The rigorous analytic study of optimal lattices for Gabor systems is relatively new and, to the best of our knowledge, has only been carried out numerically prior to \cite{FauSte17}. We are choosing a pedestrian approach for the analysis of the frame bounds' behavior, mainly using a critical point approach combined with careful interval estimates. The computations turn out to be rather technical and to avoid an overload of elementary (though delicate) calculations and estimates, we do not always give all details. However, the manuscript is self-contained and we also provide a supplement to this manuscript. We put numbered markers which tell where to find the respective calculation in the supplement.

This work is structured as follows. We present our results in Section \ref{sec:results}. In Section \ref{sec:notation} we settle the notation and provide some background information and motivation. Section \ref{sec:prep} contains explanation on auxiliary techniques used in the manuscript. The proofs of our results follow in Section \ref{sec:sech} for the hyperbolic secant, Section \ref{sec:cut} for cut-off exponentials, Section \ref{sec:1sided} for one-sided exponentials and, finally, in Section \ref{sec:2sided} we present the proofs for two-sided exponentials.

\section{The results}\label{sec:results}
We will now present an overview of the results and also provide some background information. The complete proofs are given in the later sections of this work. All parameters are always assumed to be positive, in particular $(a,b) \in \R_+ \times \R_+$.
\subsection{Hyperbolic secant}
The frame property of the hyperbolic secant and lattices of the form $a \Z \times b \Z$ has been studied by Janssen and Strohmer \cite{JanStr02}. By relating its Zak transform to the one of the Gaussian
they were able to show that
\begin{equation}
	\G(\sech, a \Z \times b \Z) \text{ is a frame}
	\quad \Longleftrightarrow \quad
	(ab)^{-1} > 1.
\end{equation}
As the (properly scaled) hyperbolic secant is invariant under the Fourier transform, it follows by the general theory of symplectic matrices and metaplectic operators \cite{Fol89, Gos11, Gro01} that the two Gabor systems
\begin{equation}\label{eq:G_sech}
	\G(\sech(\pi t), a \Z \times b \Z)
	\quad \text{ and } \quad
	\G(\sech(\pi t), b \Z \times a \Z)
	\quad \text{ are unitarily equivalent.}
\end{equation}
In particular, \eqref{eq:G_sech} says that the lattice parameters $(a,b)$ and $(b,a)$ yield the same frame bounds for the hyperbolic secant window $\sech(\pi t)$ and that the associated frame operator is invariant under rotations of the lattice by integer multiples of 90 degrees. Therefore, \emph{for any fixed density} $n$, the frame bounds as functions on $\{\L_{a,b}\mid (ab)^{-1} = n\}$ are symmetric about the pair $(1/\sqrt{n}, 1/\sqrt{n})$, so the square lattice has to be a \textit{local} extremum for both frame bounds. 
Theorem \ref{thm:sech} states that it is actually the unique \textit{global} optimum for integer oversampling.
\begin{theorem}\label{thm:sech}
	Let $g(t) = \left( \frac{\pi}{2} \right)^{1/2} \sech(\pi t)$ be the normalized and Fourier-invariant hyperbolic secant and consider the rectangular lattice $\L_{a, b} = a \Z \times b \Z$ of density $n$, i.e.~$(a b)^{-1} =  n$, with $2 \leq n \in \N$. By $A(a, b)$ and $B(a, b)$ we denote the sharp lower and upper frame bound of the Gabor system $\G (g, \L_{a, b})$, respectively. Then, we have that
	\begin{equation}
		A \left( \tfrac{1}{\sqrt{n}}, \tfrac{1}{\sqrt{n}} \right) \geq A (a,b)
		\quad \text{ and } \quad
		B \left( \tfrac{1}{\sqrt{n}}, \tfrac{1}{\sqrt{ n}} \right) \leq B (a,b),
	\end{equation}
	with equality if and only if $a = b = \tfrac{1}{\sqrt{n}}$, i.e., if the lattice is the square lattice of density $n$.
\end{theorem}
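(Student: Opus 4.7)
My plan is to reduce the two-parameter optimization to a one-parameter monotonicity problem by means of the Fourier-invariance of the normalized $\sech$. Fix the density $n \ge 2$ and parametrize the slice $(ab)^{-1} = n$ by $a > 0$, so that $b = 1/(na)$. The unitary equivalence \eqref{eq:G_sech} yields
$$
A\!\left(a,\tfrac{1}{na}\right) = A\!\left(\tfrac{1}{na},a\right), \qquad B\!\left(a,\tfrac{1}{na}\right) = B\!\left(\tfrac{1}{na},a\right),
$$
so both one-variable maps $a \mapsto A(a,1/(na))$ and $a \mapsto B(a,1/(na))$ are invariant under the involution $a \mapsto 1/(na)$, whose unique fixed point is $a = 1/\sqrt{n}$. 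In particular $a = 1/\sqrt{n}$ is automatically a critical point of each, and the task is to promote this to the unique global optimum.

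To get explicit access to the frame bounds, I would invoke Janssen's sharp representation at integer oversampling \cite{Jan96}: after the Zak transform $Z_a$ one has
$$
A(a,b) = \frac{1}{b}\essinf_{(t,\nu)\in[0,a)\times[0,b)} \Psi(t,\nu;a), \quad B(a,b) = \frac{1}{b}\esssup_{(t,\nu)\in[0,a)\times[0,b)} \Psi(t,\nu;a),
$$
with $\Psi(t,\nu;a) = \sum_{k=0}^{n-1} |Z_a g(t,\nu + kb)|^2$. Fourier-invariance of $g$ gives a matching symmetry of $\Psi$ under $(t,\nu;a)\leftrightarrow(\nu,t;b)$, so the $(a,b)\leftrightarrow(b,a)$ symmetry is manifest on the level of $\Psi$, and the $\sech$-series inside are rapidly convergent and amenable to termwise estimates (the $\sech/$Gaussian link used in \cite{JanStr02} gives extra analytic handles).

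The next step is to locate the extremizers $(t^\ast,\nu^\ast)$ of $\Psi(\cdot,\cdot;a)$ on the fundamental domain. The reflection symmetries of $\sech$ and of the lattice identify the natural candidates as $(0,0)$ for the supremum -- all translates align in phase -- and $(a/2,b/2)$ for the infimum, where destructive interference is maximal. Once these are verified to be global extremizers, $A(a,1/(na))$ and $B(a,1/(na))$ collapse to explicit univariate $\sech$-series in $a$. Differentiation in $a$, combined with the tail estimates made possible by the exponential decay of $\sech$, should then show strict monotonicity on $(0,1/\sqrt{n})$ in the direction required by the statement, and the symmetry completes the argument.

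The main obstacle I anticipate is justifying the location of the extremizers uniformly in $a$: if $(t^\ast,\nu^\ast)$ shifts with $a$, the reduction to univariate calculus is lost. The cleanest way around this -- and likely the route the authors take in the spirit of "careful interval estimates" announced in the introduction -- is to establish the target monotonicity pointwise in $(t,\nu)$, i.e.\ to show that $a \mapsto \Psi(t,\nu;a)$ is unimodal about $a = 1/\sqrt{n}$ uniformly in $(t,\nu)$. Such a pointwise lattice-rearrangement statement for $\sech$-sums then transfers mechanically through the essential sup/inf to the global statements on $A$ and $B$, and is precisely the sort of elementary-but-delicate computation one expects to be deferred to the supplement.
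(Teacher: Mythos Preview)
Your broad strategy—exploit the $(a,b)\leftrightarrow(b,a)$ symmetry to pin the square lattice as a critical point, then reduce to a one-variable monotonicity problem via Janssen's explicit bounds—is indeed the paper's route. But you misidentify the main obstacle and, as a consequence, the technical heart of the argument.

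First, the ``extremizer location'' problem you worry about is already fully resolved in \cite{Jan96}: Janssen's eq.~(7.8) gives the sharp bounds as explicit one-variable series,
\[
A(\tfrac{\eta}{n},\tfrac{1}{\eta}) = n\pi f_A(\tfrac{\eta}{n}) + n\pi f_A(\tfrac{1}{\eta}) - n,\qquad
B(\tfrac{\eta}{n},\tfrac{1}{\eta}) = n\pi f_B(\tfrac{n}{\eta}) + n\pi f_B(\eta) - n,
\]
with $f_A(t)=t\sum_{k\ge0}\sech(\alpha_k t)^2$ and $f_B(t)=t\sum_{k\ge0}\csch(\alpha_k t)^2$, $\alpha_k=\pi(k+\tfrac12)$. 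Note that the upper bound is a $\csch$-series, not a $\sech$-series; the paper reaches this form via the transformation identity \eqref{HyperTransform}, and the two bounds behave quite differently (Figure~\ref{fig:sech_f_h}). No re-examination of the Zak transform or of candidate extremizers $(t^\ast,\nu^\ast)$ is needed.

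Second, the actual difficulty is the monotonicity step, and ``differentiation in $a$ combined with tail estimates'' undersells what is required. Setting $h_A(t)=t f_A'(t)$ and $h_B(t)=t f_B'(t)$, criticality becomes $h_A(\eta/n)=h_A(1/\eta)$ and $h_B(n/\eta)=h_B(\eta)$. For the lower bound the paper uses a non-obvious ingredient you do not mention: the Bruckman identity $\pi f_A(t)+\pi f_A(t^{-1})=1$ (equivalently, the Balian--Low vanishing of $A$ at critical density), which gives the extra symmetry $h_A(t)=h_A(t^{-1})$. One then needs strict monotonicity of $h_A$ on $(1,\infty)$, which the paper proves by splitting into $(1.03,\infty)$ (direct sign analysis of $h_A'$) and $(1,1.04)$ (strict convexity of $h_A$ near the fixed point, since $h_A'(1)=0$). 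For the upper bound there is no such symmetry; instead one shows $h_B$ is strictly monotone on all of $(0,\infty)$ via interval estimates on an auxiliary function $\psi$.

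Finally, your fallback—pointwise-in-$(t,\nu)$ unimodality of $\Psi(t,\nu;a)$ about $a=1/\sqrt{n}$—is almost certainly false as stated. The symmetry you have is $\Psi(t,\nu;a)\leftrightarrow\Psi(\nu,t;b)$, which swaps the $(t,\nu)$-coordinates; it does not make any fixed slice $a\mapsto\Psi(t_0,\nu_0;a)$ symmetric about $1/\sqrt{n}$, so there is no mechanism forcing a common critical point across all $(t,\nu)$. The paper does not attempt this; it works directly with the one-variable series above.
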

A simple corollary of Theorem \ref{thm:sech} can be obtained by a symplectic deformation result, which is in particular a dilation result (see Proposition \ref{prop:symplectic_invariance})
\begin{corollary}\label{cor:sech}
	Let $g_\gamma(t) = \left( \frac{\pi}{2 \gamma} \right)^{1/2} \sech\left(\frac{\pi t}{\gamma}\right)$ be the hyperbolic secant dilated by $\gamma$. Then, the optimizing lattice is $\frac{\gamma}{\sqrt{n}} \Z \times \frac{1}{\gamma \sqrt{n}} \Z$, i.e., the by $\gamma$ dilated version of the square lattice.
\end{corollary}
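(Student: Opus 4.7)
The plan is to reduce the dilated case to Theorem \ref{thm:sech} via the dilation covariance recorded in Proposition \ref{prop:symplectic_invariance}. The key observation is that the $L^2$-unitary dilation operator $D_\gamma f(t) = \gamma^{-1/2} f(t/\gamma)$ sends the normalized Fourier-invariant hyperbolic secant $g(t) = (\pi/2)^{1/2}\sech(\pi t)$ to $g_\gamma$. Since $D_\gamma$ is metaplectic, it intertwines the time-frequency shifts along $\L_{a,b}$ with those along a rescaled lattice, and this rescaling preserves the sharp frame bounds.

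First I would read off from Proposition \ref{prop:symplectic_invariance} the explicit action on the lattice, namely $a\Z\times b\Z \mapsto \gamma a\Z\times (b/\gamma)\Z$, and note that this transformation preserves the covolume (and hence the density $n = (ab)^{-1}$). Writing $A_\gamma(a',b')$ and $B_\gamma(a',b')$ for the sharp frame bounds of $\G(g_\gamma,\L_{a',b'})$ and $A, B$ for the corresponding quantities for $g$, the proposition yields
\begin{equation}
	A_\gamma(a',b') = A(a'/\gamma,\gamma b'), \qquad B_\gamma(a',b') = B(a'/\gamma,\gamma b'),
\end{equation}
for every density-$n$ pair $(a',b')$.

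Next I would invoke Theorem \ref{thm:sech}: on density-$n$ rectangular lattices, $A$ is uniquely maximized and $B$ uniquely minimized at $(a,b) = (1/\sqrt{n},1/\sqrt{n})$. Pulling this back through the bijection $(a',b') \leftrightarrow (a,b) = (a'/\gamma, \gamma b')$ forces the optimizer for $g_\gamma$ to satisfy $a'/\gamma = 1/\sqrt{n}$ and $\gamma b' = 1/\sqrt{n}$, i.e.\ $(a',b') = (\gamma/\sqrt{n},\,1/(\gamma\sqrt{n}))$. Uniqueness transfers directly as well.

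There is no real obstacle here beyond bookkeeping: the entire content lies in Proposition \ref{prop:symplectic_invariance}, and once the dilation relabels the lattice while preserving both the density and the frame bounds, the corollary is immediate from Theorem \ref{thm:sech}. The only minor point to double-check is the normalization convention chosen for $D_\gamma$, so that $g_\gamma = D_\gamma g$ holds on the nose with the prefactors given in the statement; this is a direct computation from the definitions.
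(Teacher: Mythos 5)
Your proposal is correct and matches the paper's own argument: the paper derives Corollary \ref{cor:sech} exactly by noting that $g_\gamma = D_\gamma g$ and invoking Proposition \ref{prop:symplectic_invariance} to transfer the optimality of the square lattice from Theorem \ref{thm:sech} to the dilated lattice $\frac{\gamma}{\sqrt{n}}\Z \times \frac{1}{\gamma\sqrt{n}}\Z$. The normalization check $D_\gamma g(t) = \gamma^{-1/2}(\pi/2)^{1/2}\sech(\pi t/\gamma) = g_\gamma(t)$ indeed holds on the nose.
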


We remark that in the case of critical sampling, i.e., $(a b)^{-1} = 1$, we obtain
\begin{equation}
	B(\gamma,\tfrac{1}{\gamma}) \leq B(a\gamma,\tfrac{b}{\gamma}) < \infty.
\end{equation}
with equality if and only if $a=b=1$. This can be deduced from the explicit expressions for $B$ and combining results on Laplace-Beltrami operators on a flat torus \cite{Faulhuber_Determinants_2018} and results on Gaussian Gabor systems \cite{FauSte17}. Since we have a finite upper frame bound, the lower frame bound must vanish identically in this case, as imposed by the Balian-Low theorem.

\subsection{Cut-off exponentials}
The frame set of this class of functions is not known and so we focus only on the cases from \cite{Jan96}. In these situations we also know that we have a frame. We consider the following families of functions
\begin{equation}
	g_m(t) = C_{b,\gamma} e^{-\gamma t} \indicator_{[0,m/b]}(t), \quad m \in \{1,2\}, \, \gamma \geq 0,
	\quad C_{b,\gamma} \text{ chosen such that } \norm{g_m}_2=1.
\end{equation}

\subsubsection{The case \texorpdfstring{$m=1$}{m=1}} 

This particular family of windows is actually adapted to the lattice and we may perform an analysis depending on the decay parameter $\gamma>0$.
\begin{proposition}\label{pro:m1gamma}
	Let $A(\gamma)$ and $B(\gamma)$ be the frame bounds of the Gabor system $\G(g_1,a \Z \times b \Z)$ with $(a b)^{-1} = n \in \N$. Then, for any fixed lattice (hence also fixed $n$), the behavior of the frame bounds with respect to the decay parameter $\gamma > 0$ is
	\begin{equation}
		A(\gamma) < n, \; A'(\gamma) < 0
		\quad \text{ and } \quad
		B(\gamma) > n, \; B'(\gamma) > 0.
	\end{equation}
	The frame condition number is independent of $n$ and always given by
	\begin{equation}
		\frac{B(\gamma)}{A(\gamma)}= e^{2 a \gamma}.
	\end{equation}
\end{proposition}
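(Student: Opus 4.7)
The plan is to exploit the fact that the support $[0,1/b]$ of $g_1$ has length exactly $na$, so the window is \emph{perfectly adapted} to the lattice and the frame operator becomes a pure multiplication operator. Concretely, I would start from Walnut's representation
\[
	S_g f(t) = \frac{1}{b} \sum_{k\in\Z} \Big(\sum_{j\in\Z} g_1(t-ja)\overline{g_1(t-ja-k/b)}\Big) f(t-k/b),
\]
and observe that $g_1(t)\overline{g_1(t-k/b)}=0$ for all $k\neq 0$ because the two factors have disjoint support. Hence $S_g$ reduces to multiplication by the $a$-periodic weight $w(t)= \frac{1}{b}\sum_{j\in\Z} |g_1(t-ja)|^2$, so that $A=\essinf w$ and $B=\esssup w$.

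Next I would compute $w$ explicitly. Fixing $t\in[0,a)$, the condition $t-ja\in[0,1/b]$ singles out exactly the indices $j\in\{0,-1,\dots,-(n-1)\}$ because $1/b=na$. Substituting the explicit exponential and summing the resulting geometric series gives
\[
	w(t) = \frac{C_{b,\gamma}^2}{b}\, e^{-2\gamma t}\, \frac{1-e^{-2\gamma n a}}{1-e^{-2\gamma a}}
	     = \frac{C_{b,\gamma}^2}{b}\, e^{-2\gamma t}\, \frac{1-e^{-2\gamma/b}}{1-e^{-2\gamma a}}.
\]
The normalization $\|g_1\|_2=1$ yields $C_{b,\gamma}^2=\frac{2\gamma}{1-e^{-2\gamma/b}}$, which cancels the bracket above and leaves the clean formula $w(t)=\frac{2\gamma}{b(1-e^{-2\gamma a})}\,e^{-2\gamma t}$ on $[0,a)$. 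Since $e^{-2\gamma t}$ is monotone on this interval, the supremum is attained at $t=0$ and the infimum at $t\to a$, giving
\[
	B(\gamma)=\frac{ns}{1-e^{-s}}, \qquad A(\gamma)=\frac{ns}{e^{s}-1}, \qquad s:=2\gamma a,
\]
after using $1/(ab)=n$. In particular, the condition number is $B/A = e^{s} = e^{2\gamma a}$, which is visibly independent of $n$.

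It remains to check the four elementary inequalities. The comparison $A(\gamma)<n<B(\gamma)$ is equivalent to $s<e^{s}-1$ and $s>1-e^{-s}$, both of which follow by differentiating and noting that equality holds at $s=0$. For monotonicity, I would compute
\[
	\tfrac{d}{ds}\Big(\tfrac{s}{1-e^{-s}}\Big) = \tfrac{1-(1+s)e^{-s}}{(1-e^{-s})^2}, \qquad
	\tfrac{d}{ds}\Big(\tfrac{s}{e^{s}-1}\Big) = \tfrac{(1-s)e^{s}-1}{(e^{s}-1)^2},
\]
and observe (again by checking value and sign of the derivative at $s=0$) that the first numerator is strictly positive and the second strictly negative for $s>0$. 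Since $s$ is a positive linear function of $\gamma$, this yields $B'(\gamma)>0$ and $A'(\gamma)<0$.

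I do not anticipate a real obstacle here: the entire argument is driven by the single structural observation that support length $1/b$ together with integer oversampling makes $S_g$ diagonal in the time domain. The only mildly delicate point is bookkeeping the index set in the geometric sum (whether the right endpoint $t=a$ adds or removes a term), but this only affects a measure-zero set and therefore does not affect $\essinf$ or $\esssup$.
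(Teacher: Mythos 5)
Your proposal is correct, and its analytic endgame (differentiating $s/(1-e^{-s})$ and $s/(e^{s}-1)$ and settling the signs via $e^{s}>1+s$, plus reading off $B/A=e^{2\gamma a}$) coincides with what the paper does. The difference lies entirely in how the frame-bound formulas are obtained: the paper simply imports them from Janssen's computation (adjusting only for the normalization constant $C_{b,\gamma}$) and notes $A=B=n$ at $\gamma=0$ by l'H\^opital, so that the strict inequalities $A<n<B$ follow from the monotonicity; you instead rederive the formulas from scratch via the Walnut representation, observing that the support length $1/b$ kills all off-diagonal terms so that $S_{g_1,\L_{a,b}}$ is multiplication by the $a$-periodic weight $w$, and then prove $A<n<B$ directly from $s<e^{s}-1$ and $s>1-e^{-s}$. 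Your index bookkeeping ($n$ translates $j\in\{0,-1,\dots,-(n-1)\}$ contribute for $t\in[0,a)$, up to a measure-zero set), the cancellation of $1-e^{-2\gamma/b}=1-e^{-2\gamma na}$ against $C_{b,\gamma}^{2}$, and the identification $A=\essinf w$, $B=\esssup w$ are all sound, so your version is self-contained where the paper's relies on a citation; the paper's is correspondingly shorter. Both are complete proofs of the proposition.
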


From the calculations in the proof it will be obvious that the optimal window (for fixed $a$) is actually obtained for $\gamma \to 0$, which yields a tight frame with the box function. Moreover, the roles of $\gamma$ and $a$ are exchangeable. We may fix $\gamma > 0$ and then optimize with respect to $a$ and the above results hold verbatim with $\gamma$ replaced by $a$. Note that we then change the window and the lattice at the same time. The optimal system is degenerated $(a\to0, b\to\infty)$ and the window tends to a Dirac delta. 

\subsubsection{The case \texorpdfstring{$m=2$}{m=2}} 
We start with results for fixed lattices and optimization with respect to the decay parameter.
\begin{proposition}\label{pro:m2_gamma}
	Let $A(\gamma)$ and $B(\gamma)$ be the frame bounds of the Gabor system $\G(g_2,a \Z \times b \Z)$ with $(a b)^{-1} = n \in \N$. Then, for any fixed lattice (hence also fixed $n$), the behavior of the frame bounds with respect to the decay parameter $\gamma > 0$ is as follows:
	\begin{enumerate}[(i)]
		\item The lower frame bound $A(\gamma)$ has a unique maximum which depends on $n$ and $a = 1/(b n)$. The unique maximum is attained for the unique positive solution of the equation
		\begin{equation}
			-1 + \frac{1}{\gamma a}- \coth(\gamma a) = n \, \coth\left( \frac{\gamma n a}{2}\right) \sech(\gamma n a).
		\end{equation}
		\item For $n\in\{1,2\}$, we have $B(\gamma) > 2$ and $B'(\gamma) > 0$.
		\item For $3 \leq n \in \N$, we have that $B$ has a local maximum and a local minimum, which may be global (depending on $n$ and $a$). Denoting the point of the local maximum and minimum by $\gamma^*$ and $\gamma_*$, respectively, we have
		\begin{equation}
			\gamma^* < \frac{\arccosh\left( \frac{1+\sqrt{5}}{2} \right)}{n a} < \gamma_* .
		\end{equation}
		If $B(\gamma_*) < 2$, then the minimum is global. Moreover, we have $B'(\gamma) > 0$ for all $\gamma > \gamma_*$.
	\end{enumerate}
\end{proposition}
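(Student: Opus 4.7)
The plan is to start from Janssen's explicit spectral formulas for $\G(g_2, a\Z \times b\Z)$ at integer density $n$ (from \cite{Jan96}), which express $A(\gamma)$ and $B(\gamma)$ as closed-form combinations of hyperbolic functions with arguments $\gamma a$ and $\gamma n a$, multiplied by the normalization $C_{b,\gamma}^2 = 2\gamma/(1-e^{-4\gamma/b})$. Because $g_2$ is supported on $[0,2/b]$ while the Walnut representation has lags in $(1/b)\Z$, only lags $k\in\{-1,0,1\}$ contribute, so the spectral expressions remain tractable and their derivatives in $\gamma$ can be simplified by logarithmic differentiation.

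For (i), I would compute $A'(\gamma)$ using $(\log\sinh)'=\coth$, $(\log\cosh)'=\tanh$, and clear the manifestly positive common factors. The equation $A'(\gamma)=0$ should reduce precisely to the transcendental equation stated. Monotonicity of the two sides is then checked separately: the left-hand side $\gamma\mapsto -1+1/(\gamma a)-\coth(\gamma a)$ is strictly monotone on $(0,\infty)$, with endpoint behavior read off from $\coth x=1/x+x/3+O(x^3)$ as $x\to 0^+$ and $\coth x\to 1$ as $x\to\infty$; the right-hand side is likewise strictly monotone, with opposite asymptotics. This forces a single intersection, and a sign analysis of $A'$ at the endpoints of $(0,\infty)$ identifies it as the unique maximum.

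For (ii) and (iii), I would cast $B'(\gamma)$ as a manifestly positive prefactor times a polynomial $P_n(c)$ in the single variable $c:=\cosh(\gamma n a)$, whose sign on $(1,\infty)$ governs the critical structure. For $n\in\{1,2\}$ I expect $P_n(c)>0$ throughout $(1,\infty)$, hence $B'(\gamma)>0$; combined with the boundary value $B(\gamma)\to 2$ as $\gamma\to 0^+$ (easily read off from the explicit formula) this forces $B(\gamma)>2$ for all $\gamma>0$. For $n\geq 3$, $P_n$ should acquire two roots in $(1,\infty)$, with the decisive algebraic simplification being that the separating value of $c$ satisfies $c^2=c+1$, whose positive root is $(1+\sqrt{5})/2$. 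This produces the bracketing inequality $\gamma^*<\arccosh((1+\sqrt{5})/2)/(na)<\gamma_*$. A sign analysis of $P_n$ across its two roots identifies $\gamma^*$ as a local maximum and $\gamma_*$ as a local minimum, and the fact that $P_n(c)>0$ for $c>c_*$ gives $B'(\gamma)>0$ for $\gamma>\gamma_*$. The global character of the minimum when $B(\gamma_*)<2$ follows by comparing against the boundary value $B(\gamma)\to 2$ as $\gamma\to 0^+$ together with $B$ being strictly increasing past $\gamma_*$.

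The main obstacle will be the clean extraction of $P_n$ as a polynomial in $\cosh(\gamma n a)$ from the sum of hyperbolic terms appearing in $B'(\gamma)$: the normalization injects factors $e^{\pm 4\gamma/b}=e^{\pm 4 n\gamma a}$ that must be rewritten in $\cosh(\gamma n a)$ and $\sinh(\gamma n a)$ and then reduced via $\sinh^2=\cosh^2-1$ to a pure polynomial identity in $c$. Without this reduction, neither the qualitative split at $n=3$ nor the appearance of the golden ratio is transparent; once $P_n$ has been isolated, the remainder of parts (ii) and (iii) reduces to elementary root location and boundary-value comparisons.
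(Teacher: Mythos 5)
Your treatment of part (i) is essentially the paper's argument: write $A$ as a product of a factor depending only on $\gamma a$ and a factor depending only on $\gamma n a$, take logarithmic derivatives, and observe that the two sides of the resulting critical-point equation are strictly monotone in opposite directions with overlapping ranges, so there is exactly one critical point, identified as a maximum by the endpoint sign analysis. That part is sound (modulo the sign typo already present in the displayed equation of the proposition).

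Parts (ii) and (iii) contain a genuine gap. You propose to write $B'(\gamma)$ as a positive prefactor times a polynomial $P_n(c)$ in $c=\cosh(\gamma n a)$ and to locate the critical points as roots of $P_n$. No such reduction exists: with $B(\gamma)=\gamma a\,(\coth(\gamma a)+1)(1+\sech(\gamma n a))$ one gets
\begin{equation}
\frac{B'(\gamma)}{B(\gamma)} \;=\; a+\frac{1}{\gamma}-a\coth(\gamma a)\;-\;na\,\sech(\gamma n a)\tanh\bigl(\tfrac{\gamma n a}{2}\bigr),
\end{equation}
which mixes the two scales $\gamma a$ and $\gamma n a$ and, through the $1/\gamma$ term, is not an algebraic function of $c$ (nor of $e^{\gamma a}$). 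Consequently the critical points are not roots of any polynomial in $c$, and $\arccosh\bigl(\tfrac{1+\sqrt 5}{2}\bigr)$ is not a ``separating root'' of such a polynomial: it is the unique maximizer of the unimodal right-hand side $y\mapsto n\sech(y)\tanh(y/2)$, obtained from $\csch y=\tanh y$, i.e.\ $\cosh^2 y-\cosh y-1=0$ --- this is the only place where your $c^2=c+1$ legitimately appears. The correct mechanism is to compare a strictly decreasing function of $\gamma a$ (bounded above by $1$, tending to $1$ at $0^+$ and to $0$ at $\infty$) with this unimodal function of $\gamma n a$; a decreasing curve meets a unimodal one in $0$ or $2$ points, which yields the bracketing $\gamma^*<\arccosh((1+\sqrt 5)/2)/(na)<\gamma_*$ and the max/min identification from the sign of $B'$. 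Deciding between $0$ and $2$ intersections is itself quantitative and cannot be read off from positivity of a polynomial: one must check whether the peak of the right-hand side exceeds the left-hand side there, which the paper does via $n\tanh(1/2)>\cosh(1)$ for $n\ge 4$ and by direct evaluation for $n\in\{1,2,3\}$ (the case $n=3$ is a near miss, roughly $0.90$ against $0.88$, so it genuinely requires the computation). Your endgame (boundary value $B\to 2$, monotonicity past $\gamma_*$, global minimum iff $B(\gamma_*)<2$) is correct, but it rests on a critical-point structure that your proposed polynomial reduction cannot deliver.
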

From the computations in the proofs we will see that the parameters $\gamma$ and $a$ are exchangeable.
\begin{proposition}
	The frame condition number of the Gabor system $\G(g_2, a \Z \times b \Z)$ with $(ab)^{-1} = n \in \N$ depends on $n$ and is given by
	\begin{equation}
		\frac{B}{A}= e^{2 a \gamma} \coth \left( \frac{\gamma n a}{2} \right)^2.
	\end{equation}
	The condition number is minimal if and only if
	\begin{equation}
		\gamma a = \frac{\log\left(n+\sqrt{n^2+4} \right) - \log(2)}{n}.
	\end{equation}
\end{proposition}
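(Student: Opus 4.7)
The proof splits into two parts: deriving the closed-form formula for $B/A$, and then minimizing it over $\gamma > 0$ (equivalently, over the dimensionless variable $s := \gamma a$).

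For the first part, the plan is to divide the explicit expressions for $A(\gamma)$ and $B(\gamma)$ established in the analysis behind Proposition \ref{pro:m2_gamma}. Because $g_2$ is supported on $[0, 2/b]$, the relevant Zak-transform sums have only two nonzero terms, and the sharp frame bounds $A$ and $B$ appear as the minimum and maximum of a manifestly hyperbolic function of the Zak variable. The exponential factor $e^{2a\gamma}$ in the ratio then corresponds to the ratio of $|g_2|^2$ at the two endpoints of its support, while the $\coth(\gamma na/2)^2$ factor encodes the extremal values of the periodic oscillation in the Zak variable. Carrying out this algebraic collapse cleanly is the bulk of the work.

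For the second part, set $\Phi(s) := \log(B/A) = 2s + 2\log\coth(ns/2)$. Using the identity $(\log\coth u)' = -2/\sinh(2u)$, differentiation produces an equation of the form $\sinh(ns) = \text{const}\cdot n$ that rearranges into a quadratic in $e^{ns}$; taking the positive root recovers the stated closed form for $\gamma a$. Global minimality then follows from the boundary behavior $\Phi(s) \to +\infty$ as $s \to 0^+$ (the $\log\coth$ term diverges) and $\Phi(s) \to +\infty$ as $s \to \infty$ (the linear term dominates), combined with uniqueness of the critical point, which is immediate from strict monotonicity of $\sinh$ on $(0,\infty)$.

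The main obstacle is the first part: the Zak-transform formulas for $A$ and $B$ are asymmetric, and obtaining a single clean expression $e^{2a\gamma}\coth(\gamma na/2)^2$ for the ratio requires carefully identifying which hyperbolic terms deliver the respective extrema. Once the closed form is in hand, the optimization collapses to a short one-variable calculus exercise.
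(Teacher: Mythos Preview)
Your approach matches the paper's: both divide the explicit closed forms for $A$ and $B$ (quoted from Janssen) to obtain $B/A$ in one line, then locate the unique critical point of a function of $s=\gamma a$ and invoke boundary blow-up for global minimality. The first part is not an obstacle at all in the paper --- the Zak-transform analysis you sketch is exactly what Janssen already did, and the paper simply takes the quotient of the stated formulas for $A$ and $B$; the only difference in the second part is that the paper differentiates $B/A$ directly rather than its logarithm.

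There is, however, a concrete problem with your expectation that the computation ``recovers the stated closed form for $\gamma a$.'' Your derivative is $\Phi'(s)=2-2n/\sinh(ns)$, so the critical equation is $\sinh(n\gamma a)=n$, equivalently $x^2-2nx-1=0$ with $x=e^{n\gamma a}$. The positive root is $x=n+\sqrt{n^2+1}$, giving
\[
\gamma a \;=\; \frac{\arcsinh(n)}{n}\;=\;\frac{\log\!\bigl(n+\sqrt{n^2+1}\,\bigr)}{n},
\]
which is \emph{not} the formula in the proposition. The paper's own derivation contains an arithmetic slip at this step (the quadratic is recorded as $x^2-nx-1=0$), and the displayed answer inherits that error. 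Your method is correct; carried out carefully it will correct, rather than confirm, the stated formula.
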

We see that optimality for the decay parameter $\gamma$ depends on the lattice parameter $a$, and vice versa, as well as on the (over)sampling rate $n$.

\subsection{One-sided exponential}
The one-sided exponential function is given by
\begin{equation}
	g(t) = \sqrt{2 \gamma} \, e^{- \gamma t} \indicator_{\R_+}(t), \; \gamma > 0.
\end{equation}
We note that it is known that the Gabor system $\G(g, a \Z \times b \Z)$ is a frame whenever $(ab)^{-1} \geq 1$ \cite[Sec.\ 4]{Jan96} (see also \cite{GroRomSto18}). Note that $g$ is not continuous (thus $g \notin M^1(\R)$), so having a frame at critical sampling rate is possible. Also, the one-sided exponential plays a central role when it comes to finding examples, other than the Gaussian, of zero-free ambiguity functions \cite{GroJamMal19}.
\begin{theorem}\label{thm:1-sided}
	Consider the Gabor system $\G(g, a \Z \times b \Z)$ with $(ab)^{-1} = n \in \N$. For fixed $n$, the frame condition number is minimal only for the lattice $(a_0/n) \Z \times (1/a_0) \Z$ with
	\begin{equation}
		a_0 = \frac{\arcsinh(n)}{\gamma}.
	\end{equation}
	The lower frame bound has a unique maximizing lattice $(a_+/n) \Z \times (1/a_+) \Z$ with
	\begin{equation}
		a_+ < \frac{\arcsinh(n)}{\gamma}.
	\end{equation}
	The upper frame bound has a unique minimizing lattice $(a^-/n) \Z \times (1/a^-) \Z$ with
	\begin{equation}
		a^- > \frac{\arcsinh(n)}{\gamma}.
	\end{equation}
\end{theorem}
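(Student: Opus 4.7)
The plan is to reduce the problem to a one-parameter optimization, extract closed-form expressions for $A$ and $B$ via Janssen's integer-oversampling framework \cite{Jan96}, and then compare the critical points of $A$, $B$ and $B/A$.

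\emph{Reduction and closed forms.} Using $(ab)^{-1}=n$ I will parameterize admissible lattices by the single positive time step, so that the lattices of the theorem correspond to time steps $a_0/n$, $a_+/n$ and $a^-/n$. To extract the sharp frame bounds I plan to compute the Walnut coefficients $G_k(t)=\sum_j g(t-ja)\overline{g(t-ja-k/b)}$: for the one-sided exponential an elementary calculation on $[0,a)$ yields the geometric factorization $G_k(t)=e^{-\gamma|k|an}G_0(t)$ with $G_0(t)=2\gamma e^{-2\gamma t}/(1-e^{-2\gamma a})$. Consequently the Janssen symbol of $S_g$ decouples into a product of the monotone exponential $G_0(t)$ and a Poisson kernel in $\omega$, and extremizing each factor separately produces, after setting $u:=\gamma an$,
\begin{equation*}
A(u)=\frac{u\,e^{-u/n}\tanh(u/2)}{\sinh(u/n)},\qquad B(u)=\frac{u\,e^{u/n}\coth(u/2)}{\sinh(u/n)}.
\end{equation*}

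\emph{Condition number and signs at $u_0$.} The ratio collapses to $B/A=e^{2u/n}\coth^2(u/2)$ and $(\log(B/A))'(u)=2/n-2/\sinh(u)$; strict monotonicity of $\sinh$ gives a unique positive critical point at $\sinh(u)=n$, i.e.\ $u_0:=\arcsinh(n)$, which is the advertised $a_0=\arcsinh(n)/\gamma$ and is the unique global minimum of $B/A$. From
\begin{equation*}
(\log A)'(u)=\frac{1}{u}-\frac{1}{n}+\frac{1}{\sinh u}-\frac{\coth(u/n)}{n},\quad (\log B)'(u)=(\log A)'(u)+\frac{2}{n}-\frac{2}{\sinh u},
\end{equation*}
at $u_0$ both collapse to the common value $\frac{1}{u_0}-\frac{\coth(u_0/n)}{n}$, consistent with \eqref{eq:A'=B'}; since $u_0<n$ and the elementary inequality $x\coth x>1$ for $x>0$ applies with $x=u_0/n$, this common value is strictly negative. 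Hence $A$ and $B$ are both decreasing at $a_0/n$, which forces $a_+<a_0<a^-$ once uniqueness of the extremizers is in hand.

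\emph{Uniqueness — the main obstacle.} For $A$ I expect strict log-concavity to close the argument: a direct calculation gives $(\log A)''(u)=-\frac{1}{u^2}-\frac{\cosh u}{\sinh^2 u}+\frac{1}{n^2\sinh^2(u/n)}$, and the elementary inequality $\sinh(x)>x$ for $x>0$ forces the third term to be strictly smaller than $1/u^2$, so $(\log A)''(u)<0$ on $(0,\infty)$ and the maximizer $u_+$ is unique. The harder part, and the main obstacle, is the uniqueness of $u^-$, because $\log B$ is neither globally concave nor convex. The plan is to split the analysis at $u_0$: on $(0,u_0)$ the identity $(\log B)'=(\log A)'+2/n-2/\sinh(u)$ together with the monotonicity of $(\log A)'$ should exclude zeros of $(\log B)'$ there; on $(u_0,\infty)$ I would show that at every critical point $u_c$ of $B$ one has $(\log B)''(u_c)>0$, which (since $B\to\infty$ at both endpoints) rules out a local-max/local-min alternation and yields a single critical point. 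Verifying this positivity at critical points relies on substituting the critical-point equation into $(\log B)''$ and applying careful hyperbolic-function estimates of the type prepared in Section \ref{sec:prep}; this is where the bulk of the technical work lives, and the finer details belong in the supplement.
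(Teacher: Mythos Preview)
Your derivation of the closed forms, the condition-number argument, and the sign computation at $u_0$ are correct and match the paper's proof essentially line for line.  Your log-concavity argument for $A$ is a genuinely different (and slightly slicker) route than the paper's: instead of showing $(\log A)''<0$, the paper factors $A'(\eta)$ as a positive prefactor times $1+\eta\csch(\eta)-\tfrac{\eta}{n}-\tfrac{\eta}{n}\coth(\tfrac{\eta}{n})$ and observes that this bracket is a sum of strictly decreasing functions.  Both approaches work; yours avoids the case-check at a specific point.

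The gap is in your treatment of the upper bound.  On $(0,u_0)$ you propose to exclude zeros of $(\log B)'$ using ``the monotonicity of $(\log A)'$''; but $(\log B)'=(\log A)'+\tfrac{2}{n}-\tfrac{2}{\sinh u}$ is a decreasing function plus an increasing one, so neither the sign nor the monotonicity of $(\log B)'$ on $(0,u_0)$ follows from what you have written, and your plan on $(u_0,\infty)$ (positivity of $(\log B)''$ at critical points) is left entirely to the supplement.  The paper sidesteps all of this with the same factoring trick it uses for $A$: one writes
\[
B'(\eta)=e^{\eta/n}\csch(\tfrac{\eta}{n})\coth(\tfrac{\eta}{2})\Big(1+\tfrac{\eta}{n}-\tfrac{\eta}{n}\coth(\tfrac{\eta}{n})-\eta\csch(\eta)\Big),
\]
and observes that the bracket is strictly increasing on $(0,\infty)$, since both $x\mapsto -x\csch(x)$ and $x\mapsto 1+x-x\coth(x)$ are strictly increasing (the latter via $\tfrac{d}{dx}(1+x-x\coth x)=1-\coth x+x\csch^2 x>0$, equivalent to $(1-e^{-2x})/2<x$).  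The bracket runs from $-1$ at $0^+$ to a positive value at $\eta=2n$, so it has a unique zero, which is the unique minimizer $\eta_{B,n}$.  This replaces your entire two-region analysis with a two-line monotonicity check; I would recommend adopting it.
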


\subsection{Two-sided exponential}
Lastly, consider the two-sided exponential, which is given by
\begin{equation}
	g(t) = \sqrt{\gamma} \, e^{-\gamma |t|}, \; \gamma > 0.
\end{equation}
The Gabor system $\G(g, a \Z \times b \Z)$ is a frame whenever $(a b)^{-1} > 1$ \cite[Sec.\ 5]{Janssen_CriticalDensity_2003}. Note that the case of critical sampling is excluded this time. This is a consequence of the Balian-Low theorem as $g \in M^1(\R)$. We also refer to \cite{Janssen_Tie_2003} where windows with certain convexity properties on $\R_+$ have been studied, and the two-sided exponential falls into this class.
\begin{theorem}\label{thm:2-sided}
	Consider the Gabor system $\G(g, a \Z \times b \Z)$ with $(ab)^{-1} = n \in \N$. For fixed $n$, the frame condition number is minimal only for the lattice $(a_0/n) \Z \times (1/a_0) \Z$ where $a_0$ is contained in some open interval depending on $n$:
	\begin{equation}
		a_0 \in \tfrac{1}{\gamma} \, I_n \subset \tfrac{1}{\gamma} \left( \tfrac{\eta_n}{2}, 2 \eta_n \right), \quad \eta_n = 2 \arccosh(n).
	\end{equation}
	The lower frame bound has a unique maximum, dependent on the oversampling rate $2 \leq n \in \N$ ($A=0$ for $n=1$). The maximizing lattice $(a_+/n) \Z \times (1/a_+) \Z$ satisfies $a_+ \in 1/\gamma \, I_n$.
	
	The upper frame bound has a unique minimum, dependent on the oversampling rate $n \in \N$. The minimizing lattice $(a^-/n) \Z \times (1/a^-) \Z$ satisfies $a^- \in 1/\gamma \, I_n$.
\end{theorem}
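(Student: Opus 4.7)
The plan is to follow the strategy used throughout the paper for the other exponential windows: exploit Janssen's sharp formulas for the frame bounds when $(ab)^{-1} = n \in \N$, reduce the problem to a one-parameter optimization via the constraint $b = 1/(na)$, and analyze the critical points of the resulting one-variable functions. First I would apply the symplectic/dilation invariance (Proposition \ref{prop:symplectic_invariance}) to reduce to the case $\gamma = 1$, so that the only free parameter is $a$. Then I would write down explicit Janssen/Walnut-type closed forms for $A(a)$ and $B(a)$: because the two-sided exponential is even and $|g|^2$ is a geometric series along the sampling grid $(a/n)\Z$, the relevant sums telescope into expressions involving $\sinh$, $\cosh$, and $\coth$ of integer multiples of $a$ and $a/n$.

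Next, for each of $A$ and $B$ separately, I would compute the derivative with respect to $a$, clear positive common factors, and reduce the critical-point equation to a transcendental equation. The aim is to show each derivative changes sign exactly once on $(0,\infty)$, yielding a unique maximum of $A$ for $n \geq 2$ and a unique minimum of $B$ for every $n \in \N$. The case $n = 1$ for $A$ is vacuous since $g \in M^1(\R)$ and Balian--Low forces $A \equiv 0$ at critical density. Uniqueness is the crux, and as in the proofs of Theorem \ref{thm:1-sided} and Proposition \ref{pro:m2_gamma} it is obtained by splitting the derivative into a difference of two monotone expressions and using elementary hyperbolic estimates — $\sinh(x) > x$, $\tanh(x) < x$, monotonicity of $\coth$, convexity of $\cosh$ — to pin down a single crossing.

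For the condition number I would invoke the characterization \eqref{eq:A'=B'}: any critical point $a_0$ of $B/A$ satisfies $A'(a_0)/A(a_0) = B'(a_0)/B(a_0)$. Once the unique critical points $a_+$ of $A$ and $a^-$ of $B$ have been located, and the signs of $A'$, $B'$ determined on either side of them, an analysis of the auxiliary function $h(a) := B'(a) A(a) - A'(a) B(a)$ via the same monotone-splitting strategy yields a unique zero $a_0$ of $(B/A)'$, which a posteriori lies in the interval joining $a_+$ and $a^-$. To verify the common enclosure $a_+, a_0, a^- \in (1/\gamma)\, I_n \subset (1/\gamma)(\eta_n/2, 2\eta_n)$ with $\eta_n = 2\arccosh(n)$, I would substitute the boundary points $a = \eta_n/(2\gamma)$ and $a = 2\eta_n/\gamma$ into each critical-point equation and verify the required sign changes; the value $\arccosh(n)$ enters naturally as the point where the dominant geometric contributions in the closed form for $B/A$ balance, mirroring the role of $\arcsinh(n)/\gamma$ for the one-sided exponential in Theorem \ref{thm:1-sided}.

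The main obstacle will be the uniqueness of the critical points, in particular for $B$. The two-sided symmetry produces additional cross terms in Janssen's representation compared to the one-sided case, so the sign analysis of $B'$ is substantially more delicate: one must balance contributions with different natural periodicities in $a$ and $a/n$, and the monotone-splitting argument requires several case distinctions depending on $n$. These technical hyperbolic estimates are precisely the kind of elementary but involved computations the paper defers to the supplement, so I would present the structural steps in the main text, record the endpoint signs needed for the localization, and push the bulkier calculus details to the supplementary material with numbered markers as elsewhere in the manuscript.
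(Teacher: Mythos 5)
Your overall frame -- reduce to $\gamma=1$, parametrize by one variable, use Janssen's closed forms, and locate critical points of $A$, $B$ and $B/A$ -- matches the paper, but the mechanism you propose for the crux step (uniqueness) is precisely the one the authors report does \emph{not} work for this window. You plan to show that $A'$ and $(B/A)'$ each change sign exactly once on all of $(0,\infty)$ by splitting the derivative into a difference of two monotone expressions, as in the one-sided case. The paper explicitly notes that for the two-sided exponential the derivatives ``are combinations of products and sums which exhibit different behaviors at different intervals'' and that ``dissecting each part separately leads to unsatisfactory estimates''; no global monotone decomposition is available. The actual proof instead (i) excludes global extrema outside a compact interval $I_n\subset(\eta_n/2,2\eta_n)$ by direct \emph{value} comparisons of $A$ and $B$ against their values at $\eta_n=2\arccosh(n)$ (the ``left/right cut-offs'', where the addition formulas simplify because $\cosh(\eta_n/2)=n$), and then (ii) establishes strict log-concavity of $A$ on $I_n$ (via concavity of $\psi(\eta)=f(\eta/n)-f(\eta)$ with $f(t)=t\csch(t)$, exploiting the sign change of $f''$ near $1.6$), strict convexity of $B$ on all of $\R_+$, and strict log-convexity of $\kappa$ on $I_n$ through the decomposition $\log\kappa=-\log A-2\log\tanh(\eta/2)+\log h(\eta)$ with $h(\eta)=\tfrac{\eta}{n}\coth(\tfrac{\eta}{n})+\eta\csch(\eta)$. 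This last decomposition is essential: it lets the already-proved log-concavity of $A$ do most of the work, whereas your proposed direct analysis of $B'A-A'B$ has no evident monotone structure and would likely stall.

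Two further gaps: you give no argument for why the enclosing interval should be $(\eta_n/2,2\eta_n)$ beyond an analogy with $\arcsinh(n)$ in the one-sided case (in the paper the factor-of-two endpoints are chosen specifically so that duplication formulas for $\sinh$ and $\cosh$ make the cut-off comparisons tractable, and the interval must be \emph{shrunk} to $I_n\ne(\eta_n/2,2\eta_n)$ for $3\le n\le 10$ to keep $\eta/n$ below the inflection point of $f$); and you do not address the cases $n=2$ and $n=3$, which the paper must treat separately because the extremal points for $n=2$ do not even lie in the asymptotically predicted interval. As written, the proposal identifies the right obstacle but does not supply an idea that overcomes it.
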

We show analytically that $a^- < a_0 < a_+$, and for $n \geq 4$ we have $a^- < 2 \arccosh(n)/\gamma < a_+$.

\section{Notation and preliminaries}\label{sec:notation}
We briefly clarify the setting. Our notation is more or less the same as in the textbook of Gr\"ochenig \cite{Gro01} and the reader familiar with Gabor systems and frames may simply skip this section. For two functions $f,g \in \Lt[]$, the inner product and induced norm are given by
\begin{equation}
	\langle f, g \rangle = \int_\R f(t) \, \overline{g(t)} \, dt
	\quad \text{ and } \quad
	\norm{f}_2^2 = \langle f, f \rangle.
\end{equation}
Here $\overline{g}$ is the complex conjugation of $g$. The Fourier transform of a (suitable) function $f$ is
\begin{equation}
	\F f (\omega) = \int_\R f(t) e^{-2 \pi i \omega t} \, dt.
\end{equation}
The operator $\F$ extends to a unitary operator on $\Lt[]$. After having settled the notation, we come to defining the central objects of this work. We consider Gabor systems for the Hilbert space $\Lt[]$ over rectangular lattices:
\begin{equation}
	\G(g, \L_{a, b}) = \{ \pi(\l) g \mid \l \in \L_{a,b}\},
	\quad \text{ with} \quad
	\L_{a, b} = a \Z \times b \Z.
\end{equation}
Here $\pi(\l)$ is a unitary operator on $\Lt[]$, usually called a time-frequency shift by $\l \in \R^2$;
\begin{equation}
	\pi(\l) g(t) = M_\omega T_x g(t) = g(t-x) e^{2 \pi i \omega t}, \qquad \l = (x,\omega).
\end{equation}
The operators $T_x$ and $M_\omega$ are the familiar translation and modulation operator, respectively. 
A Gabor system is a frame if and only if the frame inequality is fulfilled
\begin{equation}\label{eq:frame}
	A \norm{f}_{L^2}^2 \leq \sum_{\l \in \L_{a,b}} | \langle f, \pi(\l) g \rangle |^2 \leq B \norm{f}_{L^2}^2, \quad \forall f \in \Lt[],
\end{equation}
for some positive constants $0 < A \leq B < \infty$, which we call frame bounds. Note that these bounds depend crucially on $g$ as well as $\L_{a,b}$. The middle expression in \eqref{eq:frame} is derived by sampling the short-time Fourier transform $V_gf$ on the lattice $\L_{a,b}$:
\begin{equation}
	V_g f(x,\omega) = \langle f, \pi(\l) g \rangle = \int_\R f(t) \overline{g(t-x)} e^{-2 \pi i \omega t} \, dt.
\end{equation}
Being a frame usually requires some redundancy of the system. The number $(a b)^{-1}$ yields the density of the lattice, which is the average number of lattice points per unit area. In the context of Gabor systems, we also speak of the (over)sampling rate. Comparably to the Nyquist rate for band-limited functions, we need $(a b)^{-1} \geq 1$. The best achievable constants in \eqref{eq:frame} are the spectral bounds of the associated Gabor frame operator which is given by
\begin{equation}
	S_{g, \L_{a,b}} f = \sum_{\l \in \L_{a, b}} \langle f, \pi(\l) g \rangle \, \pi(\l) g.
\end{equation}
In case of a Gabor frame, any element in our Hilbert space has a stable expansion of the form
\begin{equation}
	f = \sum_{\l \in \L_{a,b}} c_\l \, \pi(\l) g,
\end{equation}
with $(c_\l) \in \ell^2(\L_{a, b})$ (see also \cite{Jan81}, \cite{LyuSei99}). Note that $(ab)^{-1} \geq 1$ is a necessary condition which may be far from sufficient for expansions of the above type.
 
We recall the notion of the frame set of a window function $g$ \cite{Gro14} (see also \cite{Faulhuber_LyuNes_2019}) for rectangular lattices. The rectangular (or reduced) frame set of $g \in \Lt[]$ is given by
\begin{equation}
	\mathfrak{F}_{r} (g) = \{ \L_{a, b} = a \Z \times b \Z \mid \G(g,\L_{a, b}) \textnormal{ is a frame}\}.
\end{equation}
Note that $\L_{a,b} \in \mathfrak{F}_r(g)$ does not necessarily imply $\L_{b,a} \in \mathfrak{F}_r(g)$ (counterexamples may easily be constructed by, e.g., using the box function), but for all $\L_{a,b}\subseteq\R^2$ and all $g\in \Lt[]$ holds
\begin{equation}
	\Lambda_{a,b}\in\mathfrak{F}_r(g)
	\quad \Longleftrightarrow \quad
	\Lambda_{b,a}\in\mathfrak{F}_r(\mathcal{F} g).
\end{equation}
This a special case of the \emph{symplectic invariance of  Gabor systems} \cite{Fol89, Gos11, Gro01}. We formulate another (simplified) version only for dilations, which is sufficient for our purposes.
\begin{proposition}\label{prop:symplectic_invariance}
Let $g\in \Lt[]$ be given. We denote by $D_\gamma$ ($\gamma > 0$) the unitary dilation $D_\gamma g(t):= \gamma^{-1/2}g(\gamma^{-1}t)$. Then for all $\L_{a,b}\subseteq\R^2$ holds
\begin{equation}
	\L_{a,b}\in\mathfrak{F}_r(g)
	\quad \Longleftrightarrow \quad
	\L_{\gamma a, \gamma^{-1}b}\in\mathfrak{F}_r(D_\gamma g).
\end{equation}
Furthermore, the Gabor systems $\G(g, \L_{a, b})$ and $\G(D_\gamma g, \L_{\gamma a,b/\gamma})$ possess the same frame bounds.
\end{proposition}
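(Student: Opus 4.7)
The plan is to reduce everything to a single intertwining identity between the dilation $D_\gamma$ and time-frequency shifts. First, I would directly compute, for $\lambda=(x,\omega)\in\R^2$ and $g\in L^2(\R)$, both sides of
\begin{equation}
D_\gamma\,\pi(x,\omega)\,g(t)=\pi(\gamma x,\gamma^{-1}\omega)\,D_\gamma g(t).
\end{equation}
The left-hand side equals $\gamma^{-1/2}e^{2\pi i\omega\gamma^{-1}t}g(\gamma^{-1}t-x)$, and the right-hand side expands to the same expression. Hence $D_\gamma\pi(\lambda)=\pi(\sigma_\gamma\lambda)D_\gamma$, where $\sigma_\gamma(x,\omega):=(\gamma x,\gamma^{-1}\omega)$. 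Note that $\sigma_\gamma$ is an area-preserving bijection of $\R^2$ which maps the lattice $\Lambda_{a,b}=a\Z\times b\Z$ bijectively onto $\Lambda_{\gamma a,b/\gamma}$.

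Next, I would use this intertwining together with the unitarity of $D_\gamma$ to transfer the coefficient sums in the frame inequality. For any $f\in L^2(\R)$ and any $\lambda\in\Lambda_{a,b}$,
\begin{equation}
\langle f,\pi(\lambda)g\rangle
=\langle D_\gamma f,D_\gamma\pi(\lambda)g\rangle
=\langle D_\gamma f,\pi(\sigma_\gamma\lambda)\,D_\gamma g\rangle.
\end{equation}
Summing the squared moduli and using the bijection $\sigma_\gamma\colon\Lambda_{a,b}\to\Lambda_{\gamma a,b/\gamma}$,
\begin{equation}
\sum_{\lambda\in\Lambda_{a,b}}\lvert\langle f,\pi(\lambda)g\rangle\rvert^{2}
=\sum_{\mu\in\Lambda_{\gamma a,b/\gamma}}\lvert\langle D_\gamma f,\pi(\mu)D_\gamma g\rangle\rvert^{2}.
\end{equation}
Since $D_\gamma$ is unitary, $\norm{D_\gamma f}_2=\norm{f}_2$.

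Finally, I would conclude by comparing frame inequalities. The inequality $A\norm{f}_2^2\le\sum_{\lambda\in\Lambda_{a,b}}\lvert\langle f,\pi(\lambda)g\rangle\rvert^{2}\le B\norm{f}_2^2$ holds for every $f\in L^2(\R)$ if and only if, writing $h:=D_\gamma f$ (which ranges over all of $L^2(\R)$ as $f$ does), the inequality $A\norm{h}_2^2\le\sum_{\mu\in\Lambda_{\gamma a,b/\gamma}}\lvert\langle h,\pi(\mu)D_\gamma g\rangle\rvert^{2}\le B\norm{h}_2^2$ holds for every $h\in L^2(\R)$. This establishes both the equivalence $\Lambda_{a,b}\in\mathfrak{F}_r(g)\iff\Lambda_{\gamma a,b/\gamma}\in\mathfrak{F}_r(D_\gamma g)$ and, by taking sharp constants, the equality of frame bounds for the two Gabor systems.

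There is no real obstacle here: the whole statement is a direct consequence of the covariance of the short-time Fourier transform under the metaplectic action of the dilation group. The only care needed is to verify the precise form $(x,\omega)\mapsto(\gamma x,\gamma^{-1}\omega)$ of the induced lattice transformation, which is fixed by the initial intertwining computation and then used consistently throughout.
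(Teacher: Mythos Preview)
Your argument is correct. The paper does not actually give its own proof of this proposition; it merely records it as a special case of the symplectic invariance of Gabor systems, with references to \cite{Fol89, Gos11, Gro01}. Your direct computation of the intertwining relation $D_\gamma\pi(x,\omega)=\pi(\gamma x,\gamma^{-1}\omega)D_\gamma$ and the subsequent transfer of the frame inequality via the unitarity of $D_\gamma$ is exactly the elementary argument underlying that general result, specialized to the dilation subgroup. There is nothing to add.
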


A necessary condition, mentioned already several times, on $\L_{a,b}$ to be contained in $\mathfrak{F}_{r}$ of any function in the Hilbert space $\Lt[]$ is given by the density theorem for Gabor systems (see also \cite{Hei07}) and can be summed up in the following way:
\begin{equation}
	\mathfrak{F}_{r} (g) \subset \{ \L_{a,b} \mid (ab)^{-1} \geq 1 \}.
\end{equation}
Assuming that $g \in S_0(\R) = \{ f \in \Lt[] \mid V_f f \in L^1(\R^2) \}$, which is known as Feichtinger's algebra \cite{Fei81}, \cite{Fei83} (see also \cite{BenOko_ModulationSpaces}, \cite{Jakobsen_S0_2018}), it is known that the frame set is open and contains a neighborhood of $\mathbf{0}$ \cite{Gro14}. This means that for any non-zero window from $S_0(\R)$ there exists a sufficiently high oversampling rate $(ab)^{-1}$, depending on the window, such that Gabor frames exist. 
The case of density 1 is called critical density and only at this level Gabor orthonormal bases exist. However, for windows in Feichtinger's algebra the Balian-Low theorem (\cite{Bal81}, \cite{Low85}, see also \cite[Chap.\ 8.3]{Gro01}) shows that there exist no Gabor frames at critical density.
For the hyperbolic secant, Janssen and Strohmer \cite{JanStr02} could show that the rectangular frame set is the largest possible;
\begin{equation}
	\mathfrak{F}_{r} (\sech) = \{ \L_{a, b} \mid (a b)^{-1} > 1\}.
\end{equation}
There are other functions for which we now know the rectangular frame set. Among these are totally positive functions of finite type (greater than or equal to 2) which all possess a full rectangular frame set \cite{GroechenigStoeckler_TotallyPositive_2013}. Note
that the two-sided exponential belongs to this class of functions and the hyperbolic secant is a totally positive function of infinite type, and both are also in $S_0(\R)$. Despite the fact that they both belong to the nice function space $S_0(\R)$, they exhibit very different optimality properties. The one-sided exponential is a totally positive function of (finite) type 1 and not in $S_0(\R)$. Its frame set is the largest possible, also containing all rectangular lattices at critical density. Another example is the indicator function of a finite interval and its frame set has an extremely complicated structure \cite{DaiSun_ABC_2015, DaiSun_ABC_2016}. The first and still only window for which a full characterization of Gabor frames has been known is the Gaussian window \cite{Lyu92}, \cite{Sei92_1}, \cite{SeiWal92}. For more general density results we refer to \cite{JakLem_Density_2016}. For more details on frames and Gabor systems we refer to the textbooks of Christensen \cite{Christensen_2016} and Gr\"ochenig \cite{Gro01} and for related results we also refer to the textbook of Folland \cite{Fol89}.

After having determined the frame set of a function $g$, a natural follow-up question is on the optimality of a lattice $\L_{a,b}\in \mathfrak{F}_{r}(g)$ of fixed density $n$. Several things can be meant by that. We could look for a maximizer of the lower bound $A(a,b)$, a minimizer of the upper bound $B(a,b)$ or a minimizer of the condition number $\kappa(a,b) = B(a,b)/A(a,b)$. Neither the existence nor the uniqueness of optimizing pairs $(a,b)$ is an obvious matter. Even in the uniqueness case, the optimal lattices will not necessarily be the same for all quantities.

There are several theoretical ways to investigate the behavior of the frame bounds. In the case of integer or rational density, a popular method is the Zak transform \cite{Jan88}.
Alternatively, one can turn to duality theory,
where the ground work is due to Daubechies, Landau and Landau \cite{DauLanLan95}, Janssen \cite{Jan95}, Ron and Shen \cite{RonShe97} and Wexler and Raz \cite{WexRaz90}. For a more thorough treatise of the topic we refer to \cite{GroKop19}. 

In \cite{Jan96}, Janssen computed the optimal frame bounds for 6 different windows. We built upon those computations and determine the critical points with respect to the lattice parameters. The case of the Gaussian and rectangular lattices has already been treated by Faulhuber and Steinerberger in \cite{FauSte17}, where optimality of the square lattice (in all three senses) under rectangular lattices of integer density has been shown. An optimality result for the upper bound and the hexagonal lattice (even density) followed \cite{Faulhuber_Hexagonal_2018} (see also \cite{Montgomery_Theta_1988}) and lastly B\'etermin, Faulhuber and Steinerberger \cite{BetFauSte21} proved the optimality of the lower bound for the hexagonal lattice (even density), which also implies the optimality of the hexagonal lattice for the condition number. In what follows, we will see that this does not seem to be a general pattern.

\section{Preparations}\label{sec:prep}
\subsection{The hyperbolic functions}\label{sec:hyperbolic} 

Hyperbolic functions will be omnipresent throughout the manuscript. We will only consider them restricted to the domain $[0,\infty)$. 
The monotonicity properties of hyperbolic functions can be read easily from their Taylor expansions or the relation to the exponential function. Truncations of the Taylor series also provide good upper and lower bounds.
We mention these rather simple facts because the proofs which follow require evaluating and bounding these functions multiple times at various points. To verify the
estimates, it would often suffice to evaluate the corresponding tenth Taylor polynomial. 
Of course, the reader is also welcome to use their preferred software. For numerical checks, we used Mathematica \cite{Mathematica}, which evaluates these functions to arbitrary precision (we used a precision to the $24^{\text{th}}$ digit)\footnote{\ The Mathematica notebook is provided for download as an ancillary file.}. Still, our arguments are elementary and can also be checked by hand, but the bookkeeping is delicate at several points and makes the proofs non-trivial and rather technical.

Besides the various elementary identities involving the derivatives or squares of the hyperbolic functions\footnote{\ Some identities and explanations are provided in the supplement}, which can all be found in \cite{GradshteynRyzhik2014}, we will also use the following equation \cite[eq.\ 86]{Bruckman1977}
\begin{equation}\label{HyperTransform}
x \sum\limits_{k=-\infty}^\infty \sech(k x)^2 = 2 + \frac{2}{x} \sum\limits_{k=0}^\infty \csch\left(\left(k+\frac{1}{2}\right)\,\frac{1}{x}\right)^2.
\end{equation}
This and many other equations in \cite{Bruckman1977} illuminate the deeper connection of the hyperbolic functions to elliptic functions and elliptic integrals, which also appear to be closely linked to Gabor frames with the Gaussian window (see \cite{Jan96}, compare also \cite{Faulhuber_Rama_2019}).

\subsection{Interval estimate}\label{sec:intervals}
We will often speak of interval estimates. By that we mean bounding an expression consisting of monotonic functions (or otherwise well-known bounded functions) on an interval $[x_0, y_0]$ by the worst possible value. In some of our calculations we will use the symbols $\nearrow$ and $\searrow$ to denote that an expression is increasing or decreasing, respectively. For example, $\tanh$ is strictly monotonically increasing and $\csch$ is strictly monotonically decreasing, both strictly positive. Then $\csch(t)-\csch(2t)>0$ and
\begin{equation}
	\underbrace{\tanh(t)}_{\nearrow}\big(\underbrace{\csch(t)}_{\searrow}\underbrace{-\csch(2t)}_{\nearrow}\big) < \tanh(y_0) \big(\csch(x_0)-\csch(2y_0)\big), \quad t \in (x_0, y_0).
\end{equation}
which gives us as a rough upper bound as in Figure \ref{fig:intervals} (if the end point is $0$ or $\infty$, we would take the limit, presuming it exists). The arrows indicate the monotonicity properties we applied for the estimate. Of course, the sign of each factor also goes into the overall estimate.

\begin{figure}[ht]
	\includegraphics[width=.75\textwidth]{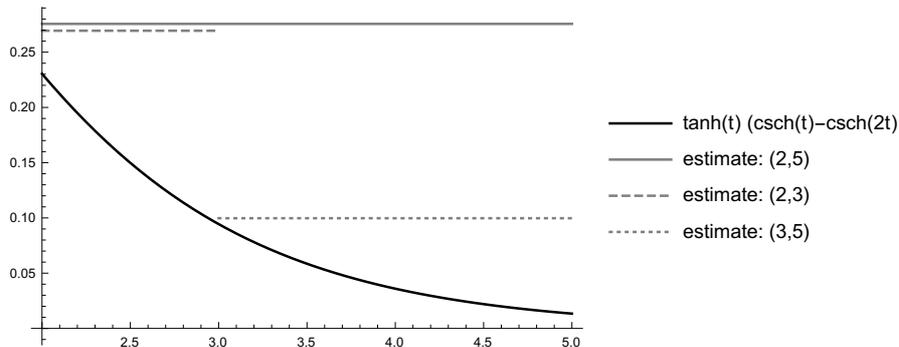}
	\caption{Estimate of the above type for $\tanh(t) (\csch(t)-\csch(2t))$ on the interval $(2,5)$. In order to get more appropriate estimates we may use the sub-intervals $(2,3)$ and $(3,5)$.}\label{fig:intervals} 
\end{figure}

\section*{Proofs}
\section{The hyperbolic secant}\label{sec:sech}
In this section we will prove Theorem \ref{thm:sech}. 
By Proposition \ref{prop:symplectic_invariance}, we only need to prove it for the case $\gamma =1$ in order to obtain Corollary \ref{cor:sech}. We will show that
the lower frame bound, the upper frame bound and the condition number of the Gabor system $\G(\sqrt{\pi/2} \, \sech(\pi t), a \Z \times b \Z)$ are optimal if and only if $a=b$, assuming $2 \leq (ab)^{-1} \in \N$. We begin with a transformation of the problem.
We use the formula provided by Janssen \cite[eq.\ (7.8)]{Jan96} (with the additional normalizing factor $\frac{\pi}{2}$) to express the frame bounds. To simplify the expressions, we set
\begin{align}
	f_A(t) = t \sum\limits_{k=0}^\infty \sech(\alpha_k t)^2
	\quad \text{ and } \quad
	f_B(t) = t \sum\limits_{k=0}^\infty \csch(\alpha_k t)^2, \quad t\in (0,\infty), 
\end{align}
where $\alpha_k = \pi(k+1/2)$. 
We use a change of variables which respects the fact that $(ab)^{-1}=n$:
\begin{equation}\label{eq:change_variables}
	(a,b) \mapsto (\tfrac{\eta}{n}, \tfrac{1}{\eta}).
\end{equation}
The exact frame bounds can now be expressed by (compare \cite[eq.\ (7.8)]{Jan96})
\begin{align}
	A(\tfrac{\eta}{n}, \tfrac{1}{\eta}) &= \tfrac{n\pi}{2}	\, 
\sum\limits_{k=-\infty}^\infty \tfrac{\eta}{n} \, \sech(\pi (k+\tfrac{1}{2})\, \tfrac{\eta}{n})^2  + \tfrac{n\pi}{2} \, \sum\limits_{k=-\infty}^\infty \tfrac{1}{\eta} \, \sech(\pi (k+\tfrac{1}{2})\, \tfrac{1}{\eta} )^2  - n \\[0.5ex]
	&= n\pi\, f_A(\tfrac{\eta}{n}) + n\pi \, f_A(\tfrac{1}{\eta}) - n, \\[0.5ex]			
	B(\tfrac{\eta}{n}, \tfrac{1}{\eta}) &= \tfrac{n\pi}{2}	\, 
\sum\limits_{k=-\infty}^\infty \tfrac{\eta}{n} \, \sech(\pi k \tfrac{\eta}{n})^2  + \tfrac{n\pi}{2} \, \sum\limits_{k=-\infty}^\infty \tfrac{1}{\eta} \, \sech(\pi k \tfrac{1}{\eta} )^2  - n \\[0.5ex]
	&= n\pi\, f_B(\tfrac{n}{\eta}) + n\pi \, f_B(\eta) - n
\end{align}
\begin{figure}[h!t]
	\subfigure[Lower bound for different densities.]{
	    \includegraphics[width=.45\textwidth]{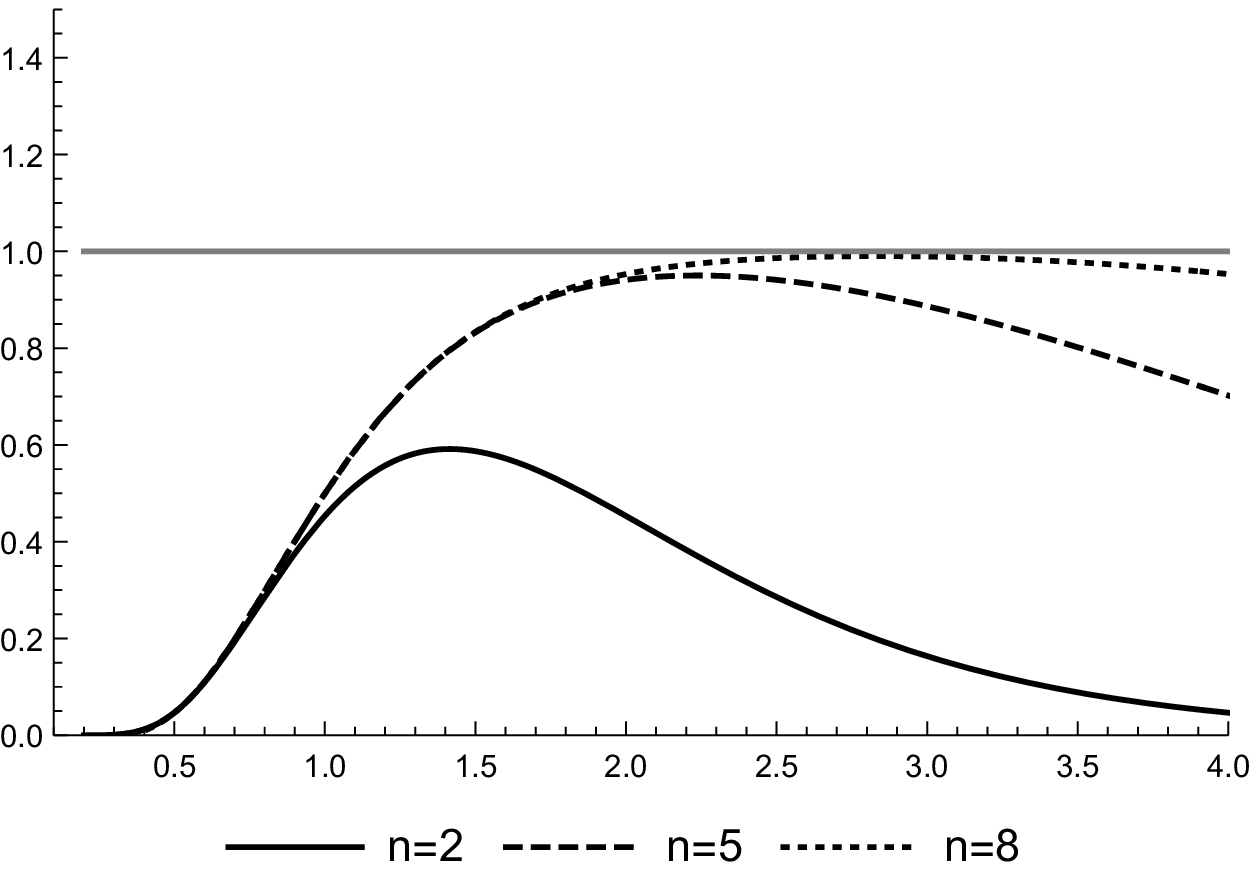}
	}
    \hfill
    \subfigure[Upper bound for different densities.]{
	    \includegraphics[width=.45\textwidth]{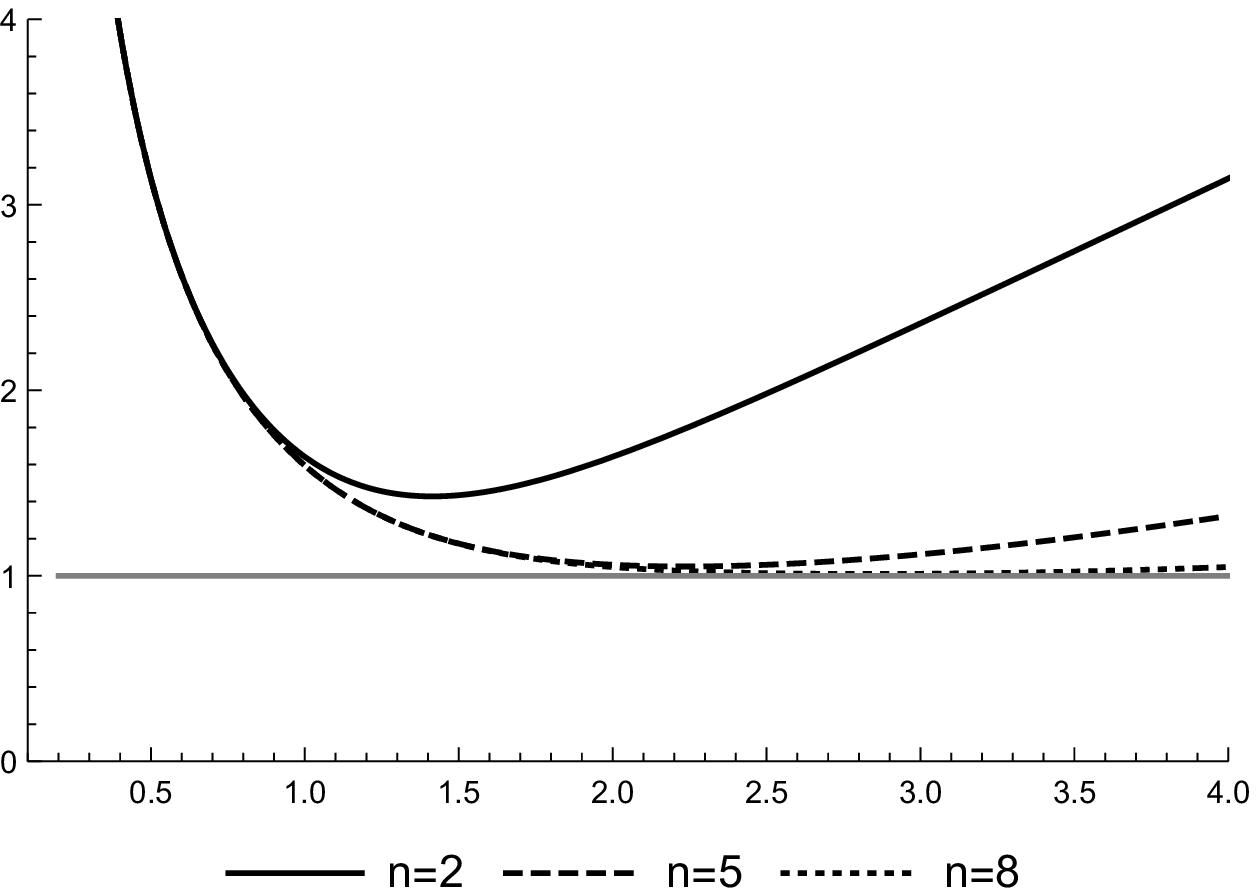}
	}
    \caption{The re-normalized optimal bounds $n^{-1} A(\eta/n,1/n)$ and $n^{-1} B(\eta/n,1/n)$ for densities $n \in \{2,5,8\}$. As $n$ grows, the bounds become extremely flat around the optimum. For both cases the optimizing parameter $\eta$ (depending on $n$) yields a square lattice of density $n$.}
    \label{fig:sech_A_B}
\end{figure}
where we used the symmetry of $\sech$ and Identity \eqref{HyperTransform}. 

The series defining $f_A$ and $f_B$ converge locally uniformly. By first taking formal derivatives (taking term-wise derivatives within the series) we see that also these series converge locally uniformly for all formal derivatives. Hence, $f_A$ and $f_B$ are infinitely continuously differentiable and we are allowed to simply differentiate term-wise (see \cite[Chap.~7]{Rud76} for technical details). The quantity $\eta$ describes the lattice geometry and we have a square lattice if $\eta = n^{1/2}$. We note that optimizing the bounds gets more and more difficult as the density grows since the curves get flatter and flatter as indicated in Figure \ref{fig:sech_A_B}.

Our strategy is the following: From the algebraic structure of the problem, it follows immediately that $\eta= n^{1/2}$ gives a critical point, which by \eqref{eq:change_variables} shows that the square lattice is critical in the set of rectangular lattices. Then, we will prove certain monotonicity properties of the involved terms and their derivatives to exclude further critical points.

Determining the critical points is achieved by finding all values $\eta \in \R_+$ such that 
\begin{align}\label{eq:extremal_sech}
	\begin{split}
		\tfrac{\eta}{n\pi} \, \tfrac{d}{d \eta} A \left( \tfrac{\eta}{n}, \tfrac{1}{\eta} \right)
		&= \tfrac{\eta}{n} \, f_A' ( \tfrac{\eta}{n} ) - \tfrac{1}{\eta} \, f_A' ( \tfrac{1}{\eta}) 
		= h_A ( \tfrac{\eta}{n} ) - h_A ( \tfrac{1}{\eta} ) = 0, \\[1ex]
	 	\tfrac{\eta}{n\pi} \, \tfrac{d}{d \eta} B \left( \tfrac{\eta}{n}, \tfrac{1}{\eta} \right)
	 	&= -\tfrac{n}{\eta} \, f_B' ( \tfrac{n}{\eta} ) + \eta \, f_B' ( \eta )
		= -h_B ( \tfrac{n}{\eta} ) + h_B(\eta) = 0.
	\end{split}
\end{align}

\begin{wrapfigure}{r}{0.45\textwidth}
	\vspace*{-0.5cm}
	\includegraphics[width=.425\textwidth]{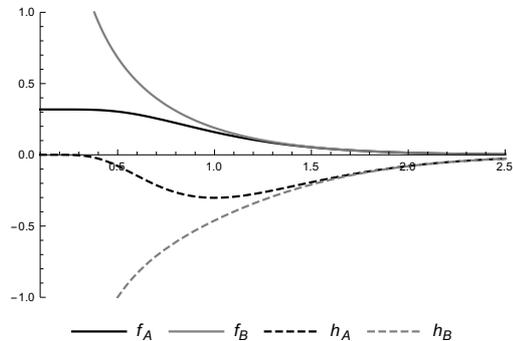}
	\vspace*{-0.3cm}
	\caption{Comparison of the behavior of $f_A$ and $f_B$ as well as $h_A$ and $h_B$.}\label{fig:sech_f_h}
\end{wrapfigure}
Here we used the abbreviation
\begin{equation}
	\hspace*{-.5\textwidth}
	h_A(t)=t \, f_A'(t)
	\quad \text{ and } \quad
	h_B(t)=t \, f_B'(t).
\end{equation}
Obviously, the expressions in \eqref{eq:extremal_sech} vanish for $\eta = n^{1/2}$, which corresponds to the square lattice of density $n$. We seek to show that there are no other critical points to prove Theorem \ref{thm:sech}.
So far, the lower and upper bound's dependence on $f_*$ and $h_*$, as well as the structure of $f_*$ and $h_*$ seem to be nearly the same. Yet, the accumulation of derivatives of $\sech$ and $\csch$ secretly amplifies several differences (see also Figure \ref{fig:sech_f_h}).

\medskip

\subsection{The lower bound}
In \cite{Bruckman1977}, we find the identity (abbreviated with our notation above)
\begin{equation}\label{eq:Bruckmann}
	\pi f_A(t) +\pi f_A(t^{-1}) = 1,
\end{equation}
which implies the point symmetry $h_A(t) = h_A(t^{-1})$. Rather remarkably, we could have concluded \eqref{eq:Bruckmann} with only our knowledge about Gabor frames at critical density (a similar remark is given in \cite{Jan96}): $A(\eta,1/\eta) = \pi f_A(\eta) + \pi f_a (\eta^{-1}) - 1 = 0$ because of the Balian-Low theorem \cite{Bal81}, \cite{Low85} (see also \cite[Chap.~8.4]{Gro01}). We will now prove that the following equivalence.
\begin{lemma}
	For all $x,y\in (0,\infty)$ holds
	\begin{equation}\label{eq:equivalence_A}
		h_A(x)=h_A(y)
		\qquad \Longleftrightarrow \qquad
		x=y \; \vee \; x=y^{-1}.
	\end{equation}
\end{lemma}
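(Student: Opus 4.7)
The plan is to reduce the claim to a monotonicity statement for $h_A$. First, differentiating the Bruckman identity~\eqref{eq:Bruckmann} with respect to $t$ yields $f_A'(t) = t^{-2} f_A'(1/t)$, which after multiplying by $t$ reads $h_A(t) = h_A(1/t)$. In particular, $t=1$ is a critical point. If, in addition, $h_A$ is strictly monotonic on $(0, 1]$, then by this symmetry it is strictly monotonic on $[1, \infty)$ with the opposite direction, so every value in the range is attained exactly twice, at points related by the involution $t \mapsto 1/t$, and the two options in \eqref{eq:equivalence_A} are exactly the two preimages. Boundary limits $\lim_{t\to 0^+} h_A(t) = \lim_{t \to \infty} h_A(t) = 0$ follow from the exponential decay of the defining series at infinity combined with the symmetry.

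To establish strict monotonicity on $(0,1]$, I would differentiate the series for $h_A$ termwise (legitimate by the locally uniform convergence of all formal derivatives noted above), obtaining
\begin{equation}
  h_A'(t) = \sum_{k \geq 0} \sech^2(\alpha_k t)\, P(\alpha_k t), \qquad P(u) = 1 - 6 u \tanh u - 2 u^2 + 6 u^2 \tanh^2 u,
\end{equation}
and show $h_A'(t) < 0$ for $t \in (0, 1)$. By the relation $h_A'(t) = -t^{-2} h_A'(1/t)$ derived from the symmetry, this is equivalent to $h_A'(s) > 0$ on $(1, \infty)$; either of the two intervals may be attacked, whichever is the more convenient.

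The main obstacle is the sign structure of the factor $P$. A short analysis (Taylor at $0$, asymptotics at $\infty$, plus sign computations at a few test points) shows that $P$ is positive near $u = 0$, becomes negative on a bounded middle range (roughly $u \in (u_-, u_+)$ with $u_- \approx \pi/2$, $u_+$ slightly below $2$), and positive again for large $u$. Consequently the individual summands of $h_A'(t)$ carry mixed signs and no termwise bound suffices.

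To control the total sum I would invoke the interval-estimate technique of Section~\ref{sec:intervals}: partition $(0, 1)$ into a small number of sub-intervals, and on each such sub-interval isolate the first $N+1$ summands (for a fixed $N$ depending only on the sub-interval), bounding them explicitly via the monotonicity of $\sech$, $\tanh$, and the polynomial factor; while the tail $k > N$ is controlled uniformly by its exponential majorant $\sech^2(\alpha_k t) \le 4 e^{-2\alpha_k t}$ so that it contributes negligibly. This reduces the proof of $h_A'(t) < 0$ to verifying a finite list of elementary numerical inequalities, which is precisely the kind of bookkeeping the authors announce as technical but feasible.
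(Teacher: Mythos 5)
Your reduction to monotonicity via the symmetry $h_A(t)=h_A(1/t)$ is exactly the paper's route, and your formula for $h_A'$ agrees with theirs (your $P$ is their bracketed factor rewritten using $\sech^2=1-\tanh^2$). However, the execution plan has a genuine gap at the endpoint $t=1$. The symmetry forces $h_A'(1)=0$, and $h_A'$ is continuous; hence the supremum of $h_A'$ over any subinterval of $(0,1)$ whose closure contains $1$ equals $0$, and no \emph{uniform} interval estimate --- a finite partial sum bounded via monotonicity plus an exponential tail majorant --- can certify $h_A'<0$ on such a piece. The paper acknowledges precisely this obstruction (``trying to find positive lower bounds for $h_A'$ close to 1 is playing a losing game'') and resolves it with a second-derivative argument: it proves $h_A''>0$ on $(1,1.04)$, which together with $h_A'(1)=0$ gives $h_A'>0$ there, and only uses direct termwise positivity of the summands for $t>1.03$. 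Your proposal contains no analogue of this convexity step, and without one the strategy cannot close.

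A secondary but consequential error: the sign structure you assign to $P$ is wrong. Splitting $P(u)=\bigl(1-2u^2\sech(u)^2\bigr)+\bigl(4u^2\tanh(u)^2-6u\tanh(u)\bigr)$ as the paper does, the first summand exceeds $0.1$ for all $u>0$ and the second is nonnegative once $u\tanh(u)\ge 3/2$, i.e.\ $u\gtrsim 1.622$; direct evaluation shows $P$ is negative roughly on $(0.39,\,1.59)$ and positive for $u\gtrsim 1.6$ --- not negative on an interval from about $\pi/2$ to just below $2$ as you claim (for instance $P(0.5)<0$ while $P(1.6)>0$). This matters for the estimates you intend to run: for $t$ near $1$ the $k=0$ summand sits at $\alpha_0 t\approx\pi/2$, just inside the negative region, and is cancelled exactly at $t=1$ by the positive $k\ge1$ terms; that delicate balance is what your interval estimates would have to, but cannot, resolve.
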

\begin{proof}
One implication is rather simple: the case $x=y$ is trivial and the case $x=y^{-1}$ follows from the already established point symmetry of $h_A$. 
We start the proof of the other implication with the observation that due to the symmetry, it suffices to prove the strict monotonicity on $(1,\infty)$ to exclude further solutions for \eqref{eq:equivalence_A}. We split our computations in two parts. The monotonicity is straightforward on the interval $(1.03, \infty)$. On the part $(1,1.04)$, we prove that $h_A$ is strictly convex, i.e., $h_A'$ is strictly increasing. Taking into account that $h_A'(1) =0$, due to the symmetry, $h'_A(t)>0$ for all $t>1$.

\bigskip

\textbf{Monotonicity away from 1}: 
Firstly, we compute the derivative\footnote{\textsuperscript{, \scriptsize{4}} \suppfoot \ Computation 3 is used twice.} of $h_a(t) = t \, f_A'(t)$.
\begin{align}
	h_A'(t)& = \sum\limits_{k=0}^\infty \sech(\alpha_k t)^2 \, \left(1-2\alpha_k^2 t^2 \, \sech(\alpha_k t)^2+4\alpha_k^2 t^2 \, \tanh(\alpha_k t)^2-6\alpha_k t \, \tanh(\alpha_k t)\right).
\end{align}
We split the expression in the brackets in two. The goal is to determine points which satisfy
\begin{equation}
	1-2x^2\,\sech(x)^2>0
	\quad \text{ and }\quad
	4x^2 \, \tanh(x)^2-6x \, \tanh(x) \geq 0.
\end{equation}
The second inequality is fulfilled as long as $x\,\tanh(x)\geq\frac{3}{2}$, which is a strictly monotonically increasing function in $x>0$. By evaluating, one can check that the solution of $x\,\tanh(x)=\frac{3}{2}$ lies in $(1.6218,1.6219)$. For the former inequality, we observe
\begin{align}
	\lim_{x \to 0} 1-2x^2\,\sech(x)^2 & = \lim_{x \to \infty} 1-2x^2\,\sech(x)^2=1, \\
	\tfrac{d}{d x} \left[1-2x^2\,\sech(x)^2 \right] & = 4x\sech(x)^2(x\tanh(x)-1).
\end{align}
Since the fixed point of $\coth$ lies in $(1.19, 1.2)$, this yields the only critical point of $x \mapsto 1-2x^2 \sech(x)^2$ for $x > 0$. This must yield the minimum. Therefore, it holds that
\begin{align}
	1-2x^2\,\sech(x)^2 > 1 - 2 \cdot 1.2^2 \cdot \sech(1.19)^2 > 0.1 > 0, \quad \forall x > 0.
\end{align}
As $\alpha_k t\geq \alpha_0 t$, for all $t\geq 1.6219\,\alpha_0^{-1} = \frac{3.2438}{\pi}\in(1.03,1.04)$, we see that $h_A'(t)>0$ for $t > 1.03$.

\bigskip

\textbf{Strict convexity close to 1}:
Since $h_A'(1)=0$, trying to find positive lower bounds for $h_A'$ close to 1 is playing a losing game, so we turn to the second derivative\footnotemark[3] of $h_A$.
\begin{align}
	h_A''(t)&= 2\,\sum\limits_{k=0}^\infty\alpha_k\sech(\alpha_k t)^2 \times
	\Big(8\alpha_k^2t^2\sech(\alpha_k t)^2\tanh(\alpha_k t) -5\alpha_kt\sech(\alpha_k t)^2\\
	& \qquad  \qquad -4\tanh(\alpha_k t)+10\alpha_k t\tanh(\alpha_k t)^2 -4\alpha_k^2t^2\tanh(\alpha_k t)^3\Big).
\end{align}
We do a similar separation as above.

\textbf{Part 1:} Recall that $\alpha_k \geq \tfrac{\pi}{2}$ for $k \geq 0$. For all $t\in[1, 1.04]$, holds (due to $x \tanh(x)\nearrow$)
\begin{align}
	& \,\quad 8\alpha_k^2t^2\sech(\alpha_k t)^2\tanh(\alpha_k t) -5\alpha_kt\sech(\alpha_k t)^2 	\\
	&\geq \, \sech(\alpha_k t)^2(8 \, \tfrac{\pi}{2} \tanh(\tfrac{\pi}{2})-5)>6 \sech(\alpha_k t)^2 >0.
\end{align}

\textbf{Part 2:} We compute
\begin{align}
	&\quad \, \alpha_k\sech(\alpha_k t)^2 \, \Big( -4\tanh(\alpha_k t)+10\alpha_k t\tanh(\alpha_k t)^2 -4\alpha_k^2t^2\tanh(\alpha_k t)^3\Big)\\
	&= \, 2 \alpha_k\sech(\alpha_k t)^2 \tanh(\alpha_k t) \, p\big(\alpha_k t\tanh(\alpha_k t)\big),
\end{align}
where $p(x) = -2x^2+5x-2 = -2(x-\frac{1}{2})(x-2)$.
First of all, $p(x) > -3x^2$ on $(1,\infty)$.
By examining the graph of $p$ and taking into account $t\in (1, 1.04)$, one can verify\footnotemark 
\begin{align}
	p(\alpha_0 t) \tanh(\alpha_0 t) &>0, \\
	p(\alpha_k t) \tanh(\alpha_k t) &<0, \qquad k\in\N.
\end{align}
We estimate
\begin{align}
	0 \geq 2 \alpha_k \tanh(\alpha_k t)\sech(\alpha_k t) \, p \big(\alpha_k t\tanh(\alpha_k t)\big) 
	> -12\big(\alpha_k t\big)^3 e^{-\alpha_k t} >-16.14,
\end{align}
where the last estimate is due to the fact that $\abs{\tanh(x)}<1$, $e^{-x}\leq\sech(x)$ and the maximum of  $x^3e^{-x}$ is attained at $x=3$.
To sum up, for all $t\in(1,1.04)$ we have\footnote{\textsuperscript{, \scriptsize{6}} \suppfoot} 
\begin{align}
	\frac{1}{2}h_a''(t) 
	& > 3 \cdot 0.07 -16.14 \cdot \tfrac{0.04}{2.46}+0.19
	> 0.13 >0.
\end{align}
All in all, we conclude that $h_a$ is strictly monotonically increasing on $(1,\infty)$ and due to the symmetry it has to be strictly monotonically decreasing on $(0,1)$. Let now $x,y \in (0,\infty)$ be two distinct points satisfying $h_A(x)=h_A(y)$. Without loss of generality, we can assume $x<y$. By the established monotonicity, $x<1<y$, implying also $1<x^{-1}$. So, we can write
\begin{equation}
h_A(\tfrac{1}{x})=h_A(x)=h_A(y).
\end{equation}
The monotonicity on $(1,\infty)$ now implies $x^{-1} = y$.
\end{proof}
So, by \eqref{eq:equivalence_A} of the above lemma we have
\begin{equation}
	h_A(\tfrac{\eta}{n}) = h_A(\tfrac{1}{\eta})
	\qquad {\Longleftrightarrow} \qquad
	\frac{\eta}{ n} = \frac{1}{\eta} \; \vee\; \frac{\eta}{ n} = \eta
	\qquad \Longleftrightarrow\qquad \eta = \sqrt{n} \; \vee \;  n=1.
\end{equation}
The case of critical density ($n=1$) is trivial and $A$ vanishes identically in this case. For all other densities $n \in \N$, $\eta = \sqrt{ n}$ is the only critical point. To conclude the statement on the lower frame bound, recall that for $(ab)^{-1} =n$, we used the map $(a,b) \mapsto (\eta/n, 1/\eta)$. Hence, the square lattice $n^{-1/2} \Z \times n^{-1/2} \Z$ yields the unique maximum of $A$ for density $2 \leq n \in \N$.

\subsection{The upper bound}
Contrary to $h_A$, the function $h_B$ possesses no symmetries, but is strictly monotonically increasing. We will split $(0,\infty)$ in multiple intervals. There is no denial that there are far better estimates than the following ones. In fact, numerical inspection of the function $\psi$ suggest $\psi>1$, while we only show its strict positivity. However, the estimates here are very simple and easy to verify. Trying to improve them would likely make the conditions under which the interval splitting is obtained significantly more complicated. At least two cases are probably unavoidable, since the function under consideration $\psi$ behaves rather differently for very small and for very big arguments. Our main strategy is similar.

\begin{lemma}\label{lemma:h_b_monotonic}
	For all $x,y\in (0,\infty)$ holds
	\begin{equation}\label{eq:equivalence_B}
		h_B(x) = h_B(y) \quad \Longleftrightarrow \quad x = y.
	\end{equation}
\end{lemma}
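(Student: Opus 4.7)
My plan is to reduce \eqref{eq:equivalence_B} to the strict monotonicity $h_B'(t) > 0$ on $(0,\infty)$. The essential difference with $h_A$ is that $h_B$ has no involutive symmetry under $t \mapsto 1/t$, so one cannot localise the search for zeros of $h_B'$ by symmetry; a global pointwise positivity estimate is needed, rather than the two-step ``monotonicity away from $1$ plus convexity near $1$'' argument that worked for $h_A$.

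The first step is a univariate reduction. By local uniform convergence one may differentiate the series for $h_B$ termwise, and writing the $k$-th summand in the variable $u = \alpha_k t$ gives
\begin{equation}
h_B(t) = \sum_{k=0}^{\infty} \frac{1}{\alpha_k} G(\alpha_k t), \qquad G(u) := u \csch(u)^2 \bigl(1 - 2 u \coth(u)\bigr),
\end{equation}
so that $h_B'(t) = \sum_{k \geq 0} G'(\alpha_k t)$, and it suffices to verify $G'(u) > 0$ for every $u > 0$. Computing $G'$ via $(\csch^2)' = -2 \csch^2 \coth$ and $\coth' = -\csch^2$ and factoring gives
\begin{equation}
G'(u) = \csch(u)^2 \, \psi(u), \qquad \psi(u) := 1 - 6 u \coth(u) + 4 u^2 \coth(u)^2 + 2 u^2 \csch(u)^2,
\end{equation}
and since $\csch^2 > 0$ the whole question reduces to showing $\psi(u) > 0$ on $(0,\infty)$.

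To handle $\psi$ I would clear denominators. Using the identity $\coth^2 = 1 + \csch^2$ to merge the last two terms of $\psi$, then multiplying by $2 \sinh(u)^2$ and substituting $v = 2u$, one arrives at
\begin{equation}
2 \sinh(u)^2 \, \psi(u) = \Phi(v), \qquad \Phi(v) := (1 + v^2) \cosh(v) - 3 v \sinh(v) + 2 v^2 - 1,
\end{equation}
so $\psi > 0$ is equivalent to $\Phi > 0$ on $(0,\infty)$. The decisive ingredient is the Taylor expansion of $\Phi$ at the origin: after collecting coefficients one finds
\begin{equation}
\Phi(v) = \frac{v^2}{2} + \sum_{n \geq 2} \frac{4 n^2 - 8 n + 1}{(2n)!} \, v^{2n},
\end{equation}
and since $4 n^2 - 8 n + 1 = 4 (n - 1)^2 - 3 \geq 1$ for all $n \geq 2$, every Taylor coefficient is strictly positive. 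Hence $\Phi(v) > 0$ for $v > 0$; unwinding the reductions gives $\psi > 0$, then $G' > 0$, and finally $h_B' > 0$ on $(0,\infty)$, so $h_B$ is strictly increasing and \eqref{eq:equivalence_B} follows.

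The point I expect to be most delicate is the cancellation at the lowest Taylor order: $(1 + v^2) \cosh(v) - 3 v \sinh(v)$ on its own contributes $-\tfrac{3}{2} v^2$, and the positive sign of the quadratic coefficient of $\Phi$ only emerges after adding the $+2 v^2$ from the trailing summand. Without first using $\coth^2 = 1 + \csch^2$ to consolidate the quadratic-at-infinity pieces of $\psi$ into a clean $4 u^2$ plus $6 u^2 \csch(u)^2$ decomposition, this cancellation is easy to obscure, and one would be forced into the interval-splitting route advertised in the paper (small-$u$ controlled by Taylor bounds, large-$u$ by the asymptotic $\psi(u) \sim 4 u^2$). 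That alternative is perfectly feasible but numerically heavier; the power-series route above avoids any numerics.
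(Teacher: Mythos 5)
Your proof is correct, and while the skeleton is the same as the paper's (reduce to $h_B'>0$ by termwise differentiation, factor out $\csch(\alpha_k t)^2$, and prove positivity of the resulting bracket $\psi$ --- your $\psi$ coincides with the paper's $1-2x^2+6x^2\coth(x)^2-6x\coth(x)$ after the substitution $\coth^2=1+\csch^2$), the decisive step is handled by a genuinely different and cleaner method. The paper proves $\psi>0$ by a case split: for $x>\log\sqrt{21}$ it bounds $\coth$ crudely from below by $1$ plus a tail term, and near the origin it resorts to interval estimates on four subintervals $(0,\sqrt{0.5})$, $(0.7,1.01)$, $(1,1.3)$, $(1.29,1.6)$, which is exactly the ``numerically heavier'' route you describe. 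You instead clear denominators, pass to $\Phi(v)=(1+v^2)\cosh(v)-3v\sinh(v)+2v^2-1$ with $v=2u$, and verify that every Taylor coefficient is positive; I checked the coefficient computation, and indeed the $v^{2n}$ coefficient is $\bigl(1+2n(2n-1)-6n\bigr)/(2n)!=(4n^2-8n+1)/(2n)!$, which is $1/2!$ at $n=1$ and $\geq 1/(2n)!$ for $n\geq 2$. This buys a numerics-free argument, an explicit quantitative lower bound $\psi(u)\geq \Phi(2u)/(2\sinh(u)^2)\geq u^2\csch(u)^2$, and it sidesteps the delicate bookkeeping the paper warns about; your diagnosis of the low-order cancellation (the $-\tfrac32 v^2$ from $(1+v^2)\cosh(v)-3v\sinh(v)$ being rescued by the $+2v^2$) is exactly the point where a careless grouping would sink the power-series approach.
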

\begin{proof}
We show that $h_B'(t) > 0$ for all $t \in (0, \infty)$, which then gives the proof.

We begin with computing the first derivative\footnotemark: 
\begin{align}
	h_B'(t)	= \sum\limits_{k=0}^\infty \csch(\alpha_k t)^2 \psi(\alpha_k t),
\end{align}
where $\psi(x) = 1-2x^2+6x^2 \coth(x)^2-6x\coth(x)$. If we prove that $\psi$ is strictly positive on $(0,\infty)$, then so is $h_B'$.

\textbf{Part 1: $x>\log (\sqrt{21})$.} 
Due to the monotonicity of $\exp$, one can easily convince themselves that $1.5 < \log(\sqrt{21})<1.6$.
Since $\coth$ is strictly monotonically decreasing, and satisfies $\coth(x) = 1+2/(\exp(2 x)-1)$, we estimate as follows for $x>\log(\sqrt{21})$:
\begin{align}
	\psi(x)> 1-2x^2+6x^2\cdot 1^2-6x\cdot 1+ \frac{2}{21-1} 
	 > 4\cdot(1.5-0.825)^2-1.7225 = 0.1 > 0.
\end{align}
\textbf{Part 2: }Close to zero, $\coth(x)$ is unbounded, so we have to take into account the interaction with $x$.
We rewrite $\psi$ on $(0,1.6)$ as 
\begin{equation}
	\psi(x) = q_1(x) + 6 \, q_2(x \, \coth(x)),
	\qquad q_1(x) = 1-2x^2
	\quad \text{ and } \quad
	q_2(x) = x(x-1).
\end{equation}	
Clearly, $q_2$ is strictly monotonically decreasing on $(0,\infty)$, whereas $q_1$ is strictly monotonically increasing and strictly positive on $(1,\infty)$. To that, $x\coth(x)$ is strictly monotonically increasing. The interval estimate of $\psi$ on an interval $(x_0,y_0)\subseteq (0,1.6)$ will satisfy
\begin{equation}
	\psi(x)>q_1(y_0)+6 \, q_2(x_0 \coth(x_0))>0,
\end{equation}
if we choose $x_0$, $y_0$ well. One can convince themselves of the inequality\footnote{\ \suppfoot} with interval estimates on $(0,\sqrt{0.5})$, $(0.7,1.01)$, $(1, 1.3)$ and $(1.29, 1.6)$ (as described in Subsection \ref{sec:intervals}). 
All in all, 
$h_B'$ is strictly positive on $(0,\infty)$.
\end{proof}
 We now consider what Lemma \ref{lemma:h_b_monotonic} means for the optimality of the upper bound $B$. We have established
\begin{equation}
	h_B(\tfrac{\eta}{n}) = h_B(\tfrac{1}{\eta})
	\quad \Longleftrightarrow \quad
	\frac{\eta}{ n} = \frac{1}{\eta}
	\quad \Longleftrightarrow \quad
	\eta = \sqrt{ n}.
\end{equation}
Furthermore, for all $\eta>\sqrt{ n}$ we have
\begin{equation}
	\tfrac{d}{d \eta}\ B(\tfrac{\eta}{n},\tfrac{1}{\eta}) > 0.
\end{equation}
Due to the symmetry about $\sqrt{n}$, the square lattice $n^{-1/2} \Z \times n^{-1/2} \Z$ is the unique minimizing lattice of $B$ among rectangular lattices of density $n \in \N$.

As $A$ is maximized by the square lattice $n^{-1/2} \Z \times n^{-1/2} \Z$ and $B$ is minimized by the square lattice $n^{-1/2} \Z \times n^{-1/2} \Z$, also the condition number $\kappa = B/A$ has to be minimal in this case.

In contrast to the Gaussian case, we do not know sufficient density conditions for the hyperbolic secant and general lattices to yield Gabor frames. So, before speculating about an optimality result for general lattices one should work out the lattice frame set of the hyperbolic secant. Nonetheless, we note that the conjecture of Strohmer and Beaver \cite{StrBea03} was based on the radial symmetry of the Gaussian in the time-frequency plane and connections to sphere packings (see also \cite{BetFauSte21}). By the characterization of radially symmetric ambiguity functions, which can only come from Gaussians and Hermite functions \cite{Fol89}, it is clear that the hyperbolic secant does not possess a radial symmetry in the time-frequency plane.

\section{Cut-off exponentials}\label{sec:cut}
\subsection{Support 1/\texorpdfstring{$b$}{b}} 
In this section we study frame bounds of cut-off exponentials with support in $[0, 1/b]$ (or any interval of length $1/b$) for the lattice $\L_{a,b} = a \Z \times b \Z$ with density $(a b)^{-1} \geq 1$.
We denote the cut-off exponential with decay parameter $\gamma > 0$ by
\begin{equation}
	g_{b,\gamma}(t) = C_{b,\gamma} \, e^{-\gamma t} \, \indicator_{[0,1/b]}(t),
\end{equation}
where $C_{b,\gamma}$ is a normalizing factor, which is explicitly given by $C_{b,\gamma} = \sqrt{2\gamma/(1-e^{-2 \gamma/b})}$. The bounds have been computed by Janssen \cite{Jan96} without the factor $C_{b,\gamma}$, so we only need a small adjustment compared to his result. We also refer to \cite[Sec.\ 3.4.4]{Daubechies_Lectures} where similar computations have been carried out. 
After proper re-normalization we end up with
\begin{equation}\label{eq:bounds_1_b}
	A = n \, \frac{2 \gamma a}{1 - e^{-2 \gamma a}} \, e^{- 2 \gamma a}
	\quad \text{ and } \quad
	B = n \, \frac{2 \gamma a}{1 - e^{-2 \gamma a}}, \qquad \gamma > 0.
\end{equation}
We may include the case of the box function ($\gamma = 0$) by a limiting procedure and by l'Hôspital's rule we obtain $A=B=n$. As we always fix $(ab)^{-1}$, we usually have one free parameter to optimize the frame bounds. This time we fix both lattice parameters, $a$ and $b$, and consider a family of windows parametrized by $\gamma > 0$.

\subsubsection{The lower frame bound}
We fix the oversampling rate $n$ and consider the lower frame bound $A$ as a function of $\gamma$. Taking the derivative yields the expression
\begin{equation}
	A'(\gamma) = -2 a n \, \frac{e^{2 a \gamma } (2 a \gamma -1)+1}{\left(e^{2 a \gamma }-1\right)^2} < 0.
\end{equation}
Verifying that the above expression is indeed negative is a simple exercise: to show that the numerator is positive, just observe that, for $x \in \R \backslash\{0\}$, $e^x > (1+x)$, which is its linearization at 0. Thus, numerator and denominator are positive and there is a minus sign in front.

\subsubsection{The upper frame bound}
With a very similar observation we also obtain that
\begin{equation}
	B'(\gamma) = 2 a n \, \frac{e^{2 a \gamma } \left(e^{2 a \gamma }-1-2 a \gamma\right)}{\left(e^{2 a \gamma}-1\right)^2} > 0.
\end{equation}

\begin{figure}[ht]
	\subfigure[Frame bounds for $\Z \times 1/n \, \Z$.]{
		\includegraphics[width=.45\textwidth]{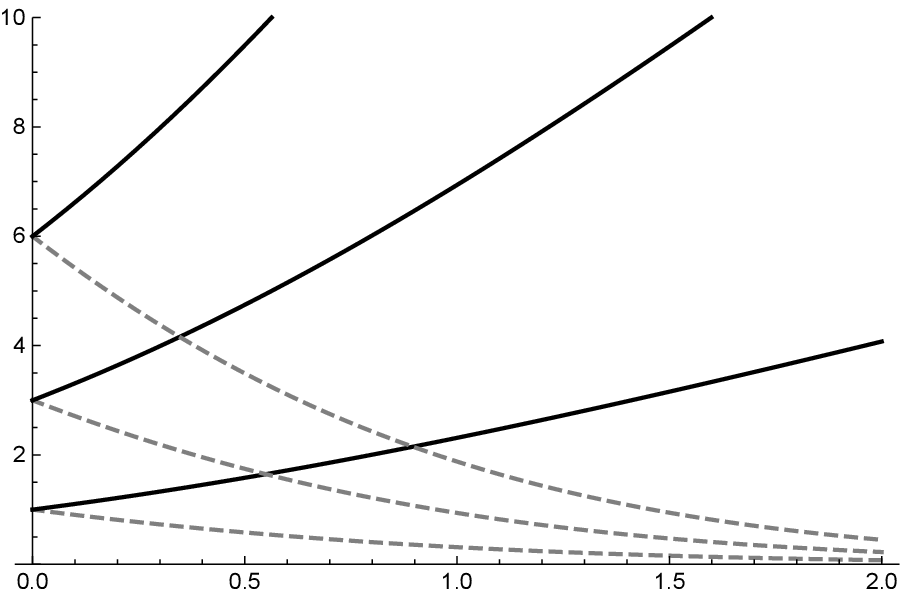}
	}
	\hfill
	\subfigure[Frame bounds for $1/\sqrt{n} \, \Z \times 1/\sqrt{n} \, \Z$.]{
		\includegraphics[width=.45\textwidth]{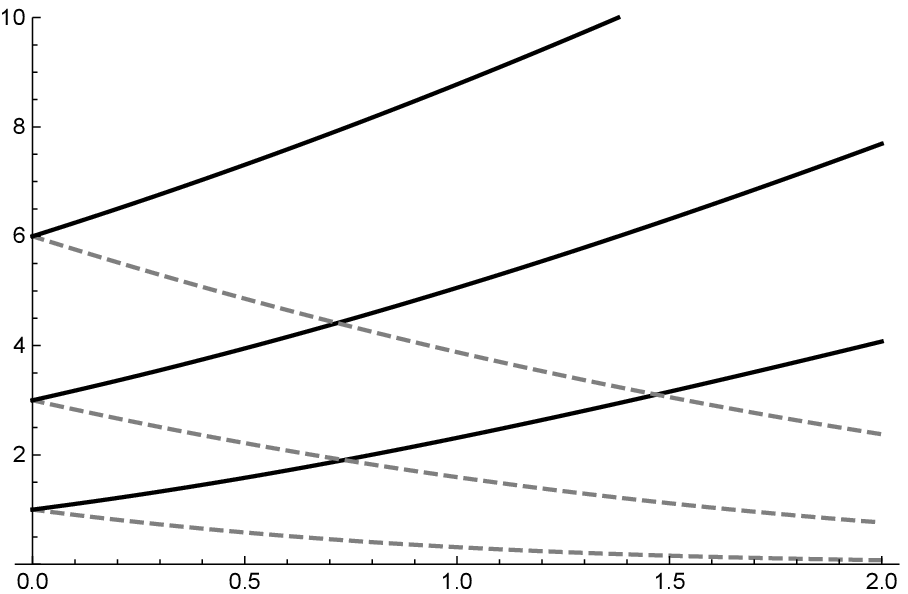}
	}
	\caption{Frame bounds of the Gabor systems $\G(g_\gamma, a \Z \times b \Z)$ for $(ab)^{-1} = n \in \{1,3,6\}$ and $a=1, \, b = 1/n$ and $a = b = \sqrt{1/n}$. For $\gamma = 0$, we always have a tight frame with bounds $A=B=n$. The upper frame bounds (black) are increasing with $\gamma$ whereas the lower frame bounds (gray, dashed) are decreasing.}\label{fig:bounds_1_b}
\end{figure}
From Figure \ref{fig:bounds_1_b} one may get the impression that the bounds deteriorate slower for the square lattice than for rectangular lattices. This is, however, only the case as long as $a > b$, which we deduce from \eqref{eq:bounds_1_b}.

\subsubsection{The condition number}
Finally, we easily observe that the condition number is
\begin{equation}
	\frac{B}{A} = e^{2 a \gamma}.
\end{equation}
We give some concluding remarks for the parameters of the family of the window $g_{b,\gamma}$. Fixing any rectangular lattice of density $n$, we see that the box function ($\gamma = 0$) is actually optimal within the family of localization windows $g_{b,\gamma}$. Thus, the additional decay without additional smoothness in the time variable yields to even worse behavior in the frequency variable. Therefore, the frame condition is only made worse by increasing the decay parameter $\gamma$.

From \eqref{eq:bounds_1_b} it is obvious that the decay parameter $\gamma$ and the lattice parameter $a$ are exchangeable for our computations. Note, however, that we change the window and the lattice at the same time. For any fixed $\gamma$ the lower bound decreases and the upper bound increases as $a$ grows. For $a \to 0$ we obtain a tight frame. This may be interpreted in a distributional sense: For $b \to \infty$ ($\Leftrightarrow \, a \to 0$), the function $g_{b,\gamma}$ tends to (a multiple of) the Dirac delta. Hence, even though we use an infinitely wide step size in the frequency domain, we know the function at every instance in time. The frame operator is therefore (a multiple of) the identity operator. In particular, an optimal lattice in terms of the frame bounds (or the frame condition number) does not exist.

\subsection{Support 2/\texorpdfstring{$b$}{b}}
Similarly to the previous case, we study frame bounds of cut-off exponentials, but this time with support in $[0, 2/b]$ for the lattice $\L_{a, b} = a \Z \times b \Z$ of density $(a b)^{-1} \geq 1$. We use (more or less) the same notation as in the previous section and denote the window by 
\begin{equation}
	g_{b/2,\gamma}(t) = C_{b/2,\gamma} \, e^{-\gamma t} \, \indicator_{[0,2/b]}(t),
\end{equation}
where $C_{b/2,\gamma}$ is the normalizing factor, which is explicitly given by $C_{b/2,\gamma} = \sqrt{2\gamma/(1-e^{-4\gamma/b})}$.
We skip the computational details again and refer to Janssen \cite[Sec.~3]{Jan96} where the bounds have been computed explicitly. With the proper normalization, the frame bounds are then
\begin{equation}
	A= C_{b/2,\gamma}^2 \, \frac{a (1-e^{-2\gamma/b})(1-e^{-\gamma/b})^2}{1-e^{-2 \gamma a}} \, e^{-2 a \gamma}
	\quad \text{ and } \quad
	B=  C_{b/2,\gamma}^2 \, \frac{a (1-e^{-2\gamma/b})(1+e^{-\gamma/b})^2}{1-e^{-2 \gamma a}}.
\end{equation}
By using the explicit expression for $C_{b/2,\gamma}$ and the fact that $(a b)^{-1} = n \ (\in \N)$, we get
\begin{equation}
	A = \frac{2 \gamma a (1-e^{-2\gamma n a})(1-e^{-\gamma n a})^2}{(1-e^{-2 \gamma a})(1-e^{-4\gamma n a})} \, e^{-2 a \gamma}
	\quad \text{ and } \quad
	B= \frac{2 \gamma a (1-e^{-2\gamma n a})(1+e^{-\gamma n a})^2}{(1-e^{-2 \gamma a})(1-e^{-4\gamma n a})}.
\end{equation}
For $\gamma \to 0$, the bounds' behavior is $A \to 0$ and $B \to 2$. In the other direction, i.e., for $\gamma \to \infty$ we have $A \to 0$ and $B \to \infty$. So, in both limiting cases we do not have a frame. This stands in contrast to the previous case, where we had a tight frame (orthonormal basis) with $A=B=n$ for the integer lattice $n^{-1/2} \Z \times n^{-1/2} \Z$ and $\gamma = 0$ (the box function).

\subsubsection{The lower bound}
Our next goal is to determine optimizing parameters for the lower bound. We start with re-writing the expression as
\begin{equation}
	A= \underbrace{\gamma a \left(\coth(\gamma a) - 1\right)}_{f(\gamma)} \, \underbrace{\left( 1 - \sech(\gamma n a) \right)}_{h(\gamma)}.
\end{equation}
Determining critical points with respect to $\gamma$ is equivalent to finding points where the sum of the logarithmic derivatives of $f$ and $h$ vanishes (because $f$ and $h$ are positive) or where:
\begin{equation}\label{eq:logA_2_b}
	\frac{f'(\gamma)}{f(\gamma)} = - \frac{h'(\gamma)}{h(\gamma)}.
\end{equation}
We compute
\begin{equation}
	\frac{f'(\gamma)}{f(\gamma)} = -a + \frac{1}{\gamma} - a \coth(\gamma a) = -a + \frac{1}{\gamma}\underbrace{(1- \gamma a \coth(\gamma a))}_{<0, \, \searrow} ,
\end{equation}
which is a negative and strictly decreasing function of $\gamma$ (in fact it is less than $-a$). For the other side of \eqref{eq:logA_2_b}, we compute
\begin{equation}
	\frac{h'(\gamma)}{h(\gamma)} = n a \coth\left(\tfrac{\gamma n a}{2}\right) \sech(\gamma n a).
\end{equation}

This is the product of two positive, strictly decreasing functions and hence positive and strictly decreasing. It follows that the right-hand side of \eqref{eq:logA_2_b} is negative and strictly increasing and therefore \eqref{eq:logA_2_b} has at most one solution. By considering the (directional) limits
\begin{equation}
	\lim_{x \to 0^+} \coth(x/2) \sech(x) = \infty
	\quad \text{ and } \quad
	\lim_{x \to \infty} \coth(x/2) \sech(x) = 0,
\end{equation}
we see that \eqref{eq:logA_2_b} must have exactly one solution and so the lower bound has a unique maximum (see Figure \ref{fig:bounds_2_b}). The optimal parameter $\gamma$, which is the unique solution of \eqref{eq:logA_2_b}, depends on the density $n$ of the lattice and the lattice geometry (the parameter $a$ fixes $b$). We note that, from a computational point of view, the decay parameter $\gamma$ and the lattice parameter $a$ are exchangeable. Hence, we also find that there is a unique lattice maximizing the lower frame bound. The lattice geometry, i.e., the parameter $a$, depends on the density and the decay parameter $\gamma$ and is (for any fixed $n$ and $\gamma$), the unique solution of $f'(a)/f(a) = - h'(a)/h(a)$.
\begin{figure}[ht]
	\subfigure[Lower frame bounds.]{
		\includegraphics[width=.45\textwidth]{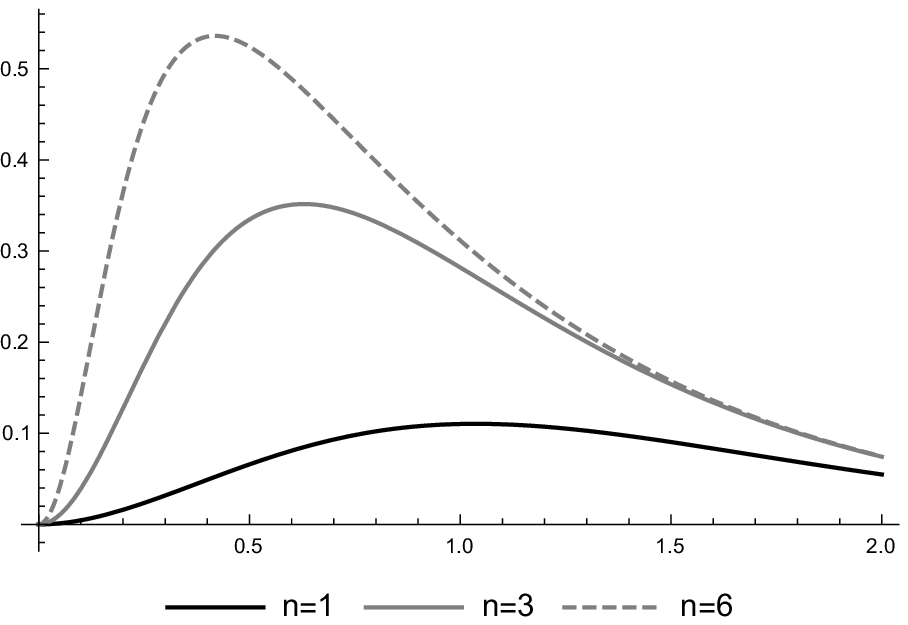}
	}
	\hfill
	\subfigure[Upper frame bounds.]{
		\includegraphics[width=.45\textwidth]{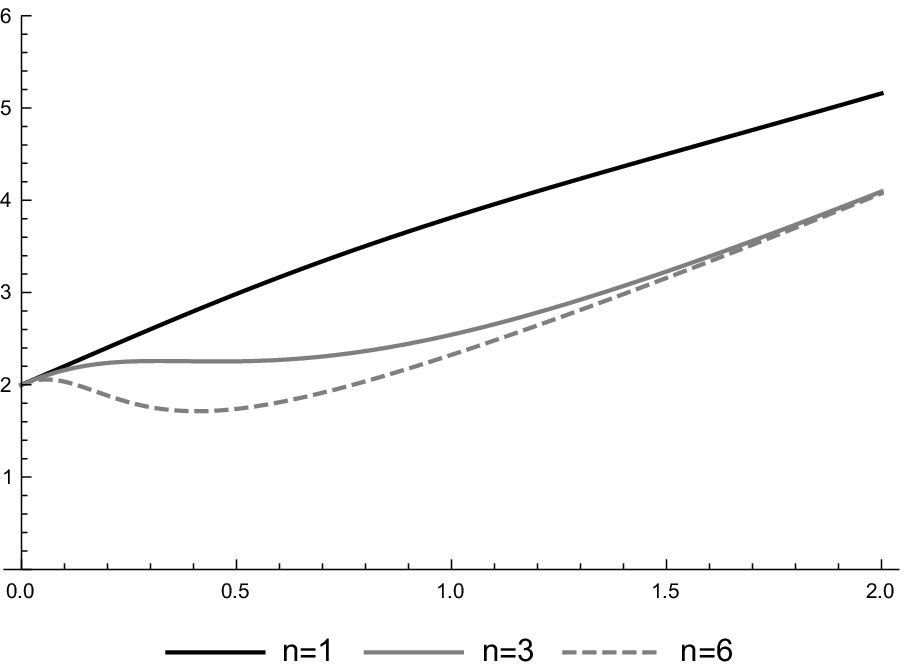}
	}
	\caption{Lower frame bounds and upper frame bounds in dependence of the lattice parameter $a$ for $\gamma=1$ and $n \in \{1,3,6\}$. We see that $A$ has a unique maximum, whereas $B$ may not have a minimizing lattice (the lattice is degenerated for $a=0$).}\label{fig:bounds_2_b}
\end{figure}

\subsubsection{The upper bound}
We will repeat the same kind of calculations for the upper frame bound. We re-write it as
\begin{equation}
	B = \underbrace{\gamma a (\coth(\gamma a) + 1)}_{\widetilde{f}(\gamma)} \, \underbrace{(1+\sech(\gamma n a))}_{\widetilde{h}(\gamma)}.
\end{equation}
Any critical points must satisfy the equation
\begin{equation}\label{eq:logB_2_b}
	\frac{\widetilde{f}'(\gamma)}{\widetilde{f}(\gamma)} = - \frac{\widetilde{h}'(\gamma)}{\widetilde{h}(\gamma)}.
\end{equation}
The left-hand side of \eqref{eq:logB_2_b} is independent of $n$ and is given by
\begin{equation}
	\frac{\widetilde{f}'(\gamma)}{\widetilde{f}(\gamma)} = a + \frac{1}{\gamma} - a \coth(\gamma a)
	= a + a \, \underbrace{\left( \frac{1}{\gamma a} - \coth(\gamma a) \right)}_{0>, \ \searrow \, , \ >-1},
\end{equation}
which is positive and decreasing (in fact, it is less than $a$). The right-hand side is given by
\begin{equation}
	 - \frac{\widetilde{h}'(\gamma)}{\widetilde{h}(\gamma)} = n a \sech(\gamma n a) \tanh \left( \tfrac{\gamma n a}{2} \right).
\end{equation}
This function is positive and we may use the logarithmic derivative to check the sign of the derivative. We obtain, after some simplifications by using identities for hyperbolic functions,
\begin{equation}
	\tfrac{d}{d \gamma} \left[\log\left(n a \sech(\gamma n a) \tanh \left( \tfrac{\gamma n a}{2} \right)\right)\right] = n a (\csch(\gamma n a) - \tanh(\gamma n a)),
\end{equation}
which has a unique 0. Since $-\widetilde{h}'(\gamma)/\widetilde{h}(\gamma)$ has only 1 critical point, we conclude that this is the unique global maximum by the asymptotic behavior. As the right-hand side of \eqref{eq:logB_2_b} decays by an exponential factor faster than the left-hand side, we conclude that, depending on $n$, \eqref{eq:logB_2_b} has either 0, 1 or 2 solutions (see Figure \ref{fig:supp_2_b_auxB}). We show that for $n \in \{1,2\}$, there is no solution and for $n \geq 3$ we find 2 solutions: dividing \eqref{eq:logB_2_b} by $a > 0$ leaves us with the equation
\begin{equation}\label{eq:supp_2_b_auxB}
	1 + \tfrac{1}{\gamma a} - \coth(\gamma a) = n \, \sech(\gamma n a) \tanh(\tfrac{\gamma n a}{2}),
\end{equation}
where the left-hand side is strictly decreasing and at most 1 and where the right-hand side has a unique global maximum. The point of the maximum is attained where the logarithmic derivative vanishes. This brings us to solving the equation
\begin{equation}
	\csch(\gamma n a) = \tanh(\gamma n a)
	\quad \Longleftrightarrow \quad
	\cosh(\gamma n a)^2 - \cosh(\gamma n a) - 1 = 0.
\end{equation}
The quadratic polynomial in $\cosh$ has exactly one positive solution, namely:
\begin{equation}
	\gamma n a = \arccosh\left(\tfrac{1 + \sqrt{5}}{2}\right) \approx 1.06128 \ldots
\end{equation}
\begin{figure}[ht]
	\includegraphics[width=.65\textwidth]{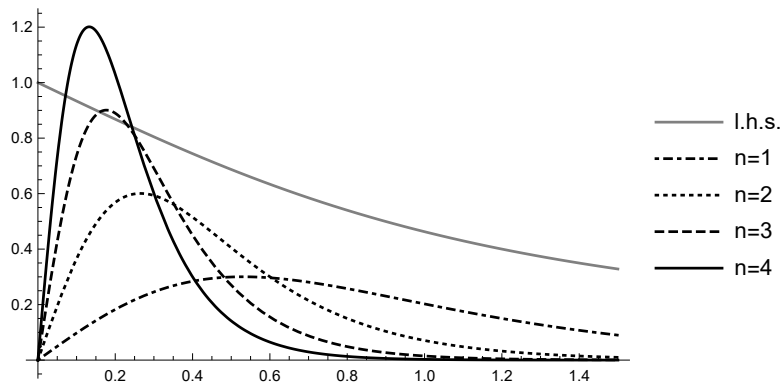}
	\caption{Illustration of the two sides of equation \eqref{eq:supp_2_b_auxB} for fixed $a=2$ and $n \in \{1,2,3,4\}$. The illustration suggests that for any $n \geq 3$ we have two solutions in $\gamma$ for equation \eqref{eq:supp_2_b_auxB}.}\label{fig:supp_2_b_auxB}
\end{figure}

We set $y = \gamma n a$ and to imply the correctness of \eqref{eq:supp_2_b_auxB} we want to determine those $n$ where
\begin{equation}
	n \, \sech(y) \tanh(\tfrac{y}{2}) > 1.
\end{equation}
Note that the solution depends on the product $\gamma n  a = y$ and so the optimal parameter $\gamma$ depends on the lattice geometry (the parameter $a$) and the lattice density (the parameter $n$). We evaluate $\sech(y) \tanh(y)$ close to its maximum, i.e., close to $y = \arccosh\left(\frac{1 + \sqrt{5}}{2}\right)$. To simplify computations, we set $y=1$, which is sufficiently close to the point of interest. After an elementary manipulation of the last inequality, this leaves us with solving
\begin{equation}
	n \tanh(\tfrac{1}{2}) > \cosh(1),
\end{equation}
which could be done numerically, but also with comparable effort analytically. We want to show that, for $n$ sufficiently large,
\begin{equation}
	n \tanh(\tfrac{1}{2}) = n - \frac{2n}{e+1} > \frac{e+e^{-1}}{2} = \cosh(1).
\end{equation}
It is not difficult to establish $e+1 > 3.7$ and $3.5 > e + e^{-1}$. So, it suffices to find $n$ such that
\begin{equation}
	2n - \tfrac{4n}{3.7} > 3.5,
\end{equation}
which holds for all $n \geq 4$. Hence, for $n \geq 4$ the upper bound $B$ (as a function of $\gamma$) has two critical points. By the behavior of the bound, the smaller value yields a local maximum and the larger value a minimum, which may be global, depending on $n$. For $n \in \{1,2,3\}$ we evaluate both sides of \eqref{eq:supp_2_b_auxB} with Mathematica at $\arccosh(\frac{1+\sqrt{5}}{2})$ and see that \eqref{eq:logB_2_b} does not have a solution for $n \in \{1,2\}$ and two solutions for $n=3$. Again, $\gamma$ may be exchanged for $a$. So, for $n \in \{1,2\}$ we find that there is no optimal lattice minimizing $B$ (the minimizing lattice degenerates) and for $n \geq 3$ there exist two critical lattices, one which is a local maximizer and the other a (local) minimizer, which is the only candidate for the global minimizer. As $B(0) = 2$, the minimizer is global if its value is less than 2 (compare Figure \ref{fig:bounds_2_b}).

\subsubsection{The condition number}
What is left to study is the condition number, which is simply given by the expression
\begin{equation}\label{eq:cond_2_b}
	\frac{B(\gamma)}{A(\gamma)} = e^{2 \gamma a} \coth\left(\frac{\gamma n a}{2}\right)^2.
\end{equation}
Taking the derivative with respect to $\gamma$ and looking for critical points leads to the equation
\begin{equation}
	\frac{2 a \, e^{2 \gamma a} \left(e^{2 n \gamma a} - n \, e^{n \gamma a} - 1\right)}{\left(e^{a \gamma  n}-1\right)^3} = 0
	\quad \Longleftrightarrow \quad
	e^{2 n \gamma a} - n \, e^{n \gamma a} - 1 = 0.
\end{equation}

\begin{wrapfigure}[14]{r}{.5\textwidth}
	\vspace*{-0.6cm}
		\includegraphics[width=.475\textwidth]{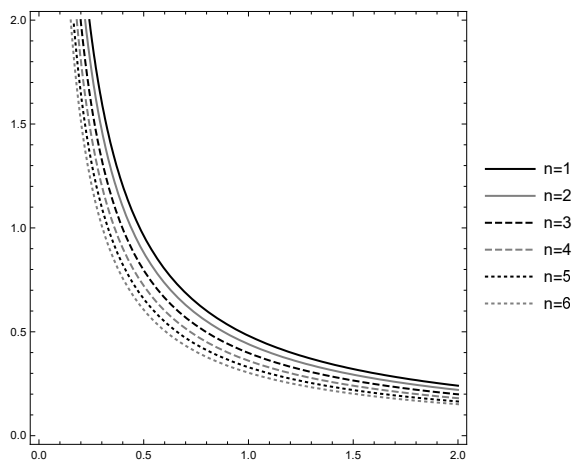}
	\vspace*{-0.3cm}
	\caption{Curves of optimal parameters $(a,\gamma)$ for different lattice densities $n \in \{1,2,3,4,5,6\}$.}\label{fig:cutoff_2b_opt_lattice}
\end{wrapfigure}
Setting $x = e^{n \gamma a}$ leaves us with solving a simple quadratic equation: $x^2-nx-1 = 0$. There is only one positive solution to this equation, which is $x = (n + \sqrt{n^2+4})/2$. Therefore, the critical points of the condition number are attained for
\begin{equation}\label{eq:curves}
	\hspace*{-0.5\textwidth}
	\gamma = \frac{\log\left(n+\sqrt{n^2+4}\right)-\log(2)}{a \, n}.
\end{equation}
Note that the dependence on the lattice parameter $a$ is the same as for the decay parameter $\gamma$, which can easily be seen from \eqref{eq:cond_2_b}. Moreover, we see that \eqref{eq:curves} describes a curve in the space of parameters $(a,\gamma)$ yielding the optimal pair of decay and lattice parameters in dependence of the oversampling rate $n$ (see Figure \ref{fig:cutoff_2b_opt_lattice}). Lastly, we deduce from \eqref{eq:cond_2_b} that $B/A$ is decreasing as a function of $n \in \N$ and
\begin{equation}
	\frac{B}{A} \to e^{2 \gamma a}
	\quad \text{ for } \quad
	n \to \infty.
\end{equation}
So, for general $\gamma > 0$ and non-degenerate lattice ($a > 0$), the frame operator does not converge to a (multiple of) the identity operator, which however holds for any window $g \in S_0(\R)$ \cite{FeiKoz98}.

This fact seems to be accompanied by the results in \cite{FeiJan00} (compare also \cite[p.\ 132]{Gro01}). We remark that exchanging the parameters $a$ and $\gamma$ comes from the algebraic expressions in the calculations and is not a dilation result by applying a metaplectic operator.

\section{The one-sided exponential}\label{sec:1sided}
In this section we study frame bounds of one-sided exponentials. The lattice parameters satisfy $(ab)^{-1} = n\in \N$. In particular, we obtain a frame at the critical density as shown by Janssen \cite[Sec.~4]{Jan96}. The function of interest is 
\begin{equation}
g_\gamma(t) = \sqrt{2\gamma}\, e^{-\gamma t}\indicator_{(0,\infty)}(t),\quad t\in\R,
\end{equation}
where $\gamma>0$ is the dilation parameter. By Proposition \ref{prop:symplectic_invariance}, it suffices to consider the case $\gamma=1$. Considering the dependencies as presented in \cite{Jan96}, we will express the lattice pair again as $(\eta/n, 1/\eta)$, $\eta>0$. Then the frame bounds are illustrated in Figure \ref{fig:1sided_A_B} and given by
\begin{equation} 
	A(\tfrac{\eta}{n}, \tfrac{1}{\eta}) = 2\eta \, \tanh(\tfrac{\eta}{2})\, \csch(\tfrac{\eta}{n}) \, e^{- \tfrac{\eta}{n}} \qquad \text{and} \qquad 
	B(\tfrac{\eta}{n}, \tfrac{1}{\eta}) = 2\eta \, \coth(\tfrac{\eta}{2})\, \csch(\tfrac{\eta}{n}) \, e^{ \tfrac{\eta}{n}}. 
\end{equation}
\begin{figure}[h!t]
	\subfigure[Lower frame bounds.]{
		\includegraphics[width=.45\textwidth]{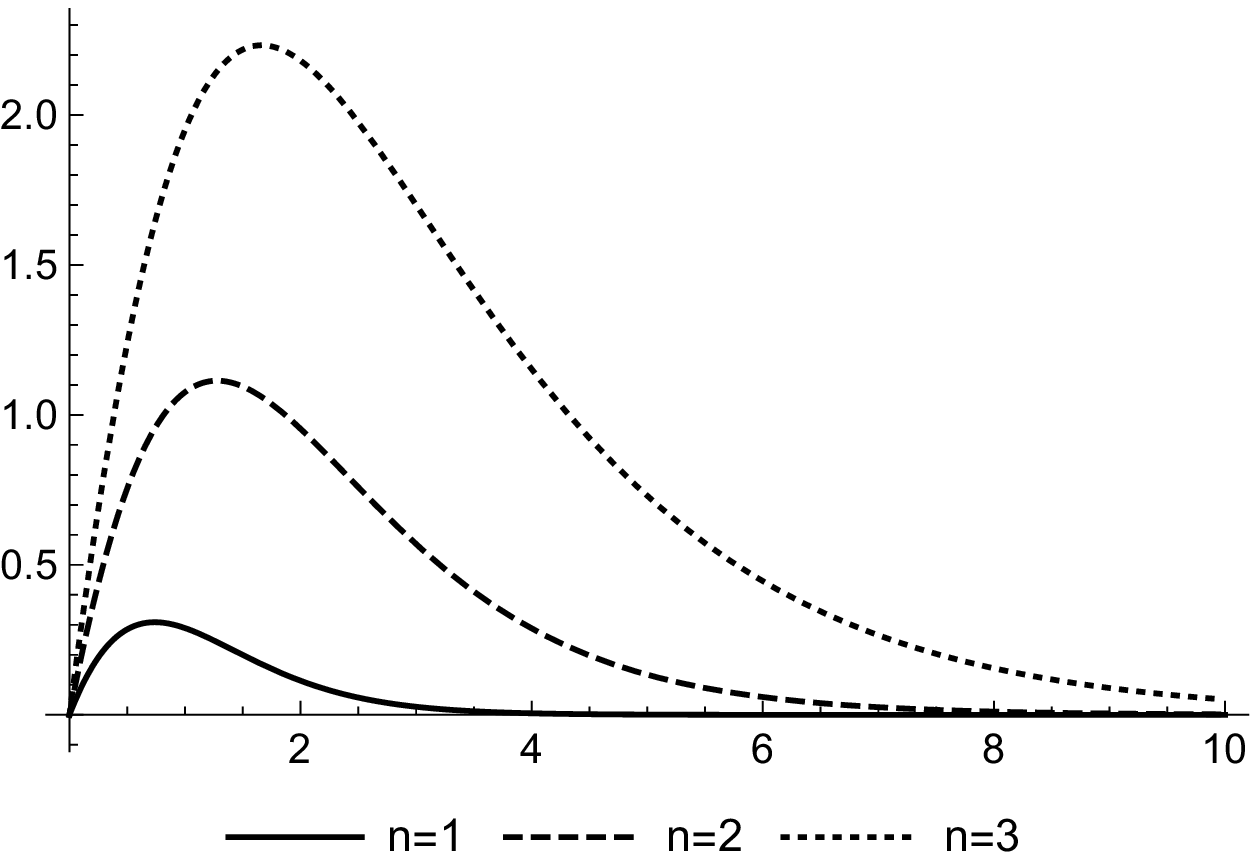}
	}
	\hfill
	\subfigure[Upper frame bounds.]{
		\includegraphics[width=.45\textwidth]{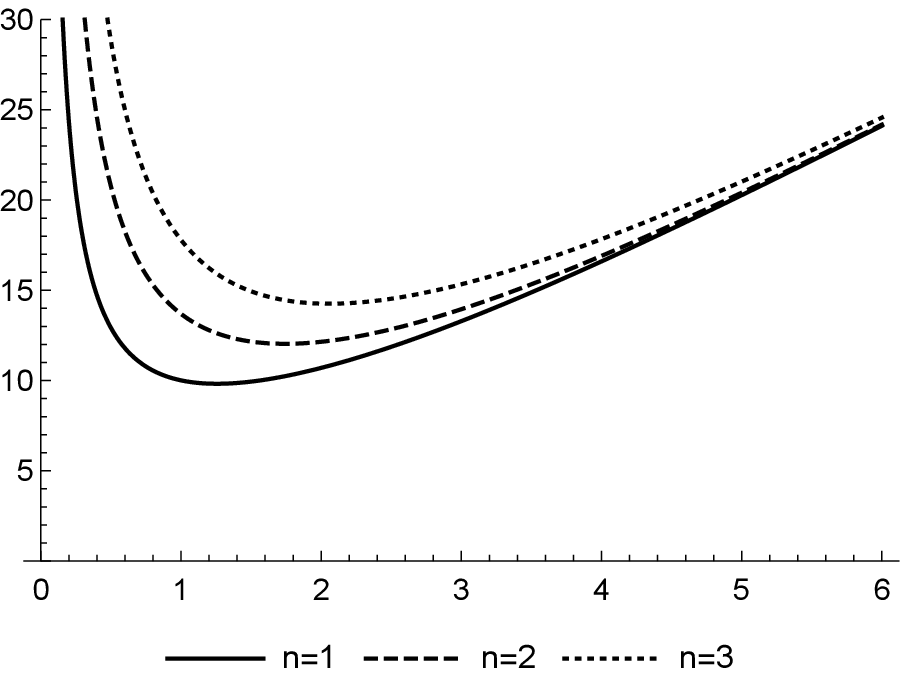}
	}
	\hfill
	\caption{Lower and upper bounds for the parameter $\gamma=1$ and density $n\in\{1, 2, 3\}$.}\label{fig:1sided_A_B}
\end{figure}

\subsection{The condition number}\label{sec:1sided_cond}
This time, we start with the condition number which has an obvious minimizing lattice. We easily compute
\begin{equation}
	\kappa(\tfrac{\eta}{n}, \tfrac{1}{\eta}) = \big(\coth(\tfrac{\eta}{2})\, e^{\frac{\eta}{n}}\big)^2,
\end{equation}
which is minimal if and only if $\sqrt{\kappa}$ is minimal. We compute the derivative of $\sqrt{\kappa}$:
\begin{equation}
	\tfrac{d}{d \eta}\big(\coth(\tfrac{\eta}{2})\, e^{\frac{\eta}{n}}\big)  = \tfrac{1}{n} e^{\frac{\eta}{n }} \csch^2(\tfrac{\eta}{2})\ (\sinh(\eta)-n).
\end{equation}
The sign of the derivative is determined by $\sinh$, which is a strictly monotonically increasing, unbounded, and attains $0$ at $0$. Therefore, the unique minimum of the condition number $\kappa$ is attained at $\eta = \arcsinh(n)$. Next, we will show that this point lies between the maximizer of the lower and the minimizer of the upper bound.

\subsection{The lower bound}
For $\gamma = 1$, we start with computing the first derivative of the lower bound $A=A_1$ and show that it has a unique zero.
\begin{align}
	\tfrac{d}{d \eta}\ A(\tfrac{\eta}{n}, \tfrac{1}{\eta}) 
		&=e^{-\frac{\eta}{n}}\csch(\tfrac{\eta}{n}) \tanh(\tfrac{\eta}{2})\Big( 1+\eta\,\csch(\eta)-\tfrac{\eta}{n}-\tfrac{\eta}{n}\coth(\tfrac{\eta}{n}) \Big),
\end{align}
where the last equality is due to the addition theorems\footnote{\textsuperscript{, \scriptsize{9}, \scriptsize{10}} \suppfoot}. Furthermore, the function 
\begin{equation}
	\eta \mapsto 1+\eta\,\csch(\eta)-\tfrac{\eta}{n}-\tfrac{\eta}{n}\coth(\tfrac{\eta}{n}), \qquad n \in \N,
\end{equation}
is a sum of strictly monotonically decreasing functions ($f$ and $-x-x\coth(x)$). Taking into account the monotonicity in $n$, 
it suffices to verify there is a $\eta\in(0,\infty)$ with $A(\eta/n,\eta^{-1})>0$. One valid choice is $\eta=0.5$. 
On the other hand, the function is unbounded from below:
\begin{align}
	\lim\limits_{\eta\rightarrow\infty}\eta \csch(\eta)-\tfrac{\eta}{n}-\tfrac{\eta}{n}\coth(\tfrac{\eta}{n}) = -\infty.
\end{align}
Therefore, there exists a unique $\eta_{A,n} > 0$ such that the lower bound $A(\tfrac{\eta}{n}, \tfrac{1}{\eta})$ has a unique maximum in $\eta_{A,n}$. We check whether it is $\eta=\arcsinh(n)$, which we know optimizes the condition number:
\begin{equation}
	1+\eta \csch(\eta)-\tfrac{\eta}{n}-\tfrac{\eta}{n}\coth(\tfrac{\eta}{n}) = 1+\tfrac{\eta}{n}-\tfrac{\eta}{n}-\tfrac{\eta}{n}\coth(\tfrac{\eta}{n}) = 1-\tfrac{\eta}{n}\coth(\tfrac{\eta}{n})<0.
\end{equation}
It follows that $\eta_{A,n}<\arcsinh(n)$. It is also simple to see that $(\eta_{A,n})_{n\in\N}$ is strictly monotonically increasing: $\eta_{A,n}$ is the unique solution to 
\begin{equation}
	1+\eta \csch(\eta)= \tfrac{\eta}{n}+\tfrac{\eta}{n}\coth(\tfrac{\eta}{n}). 
\end{equation}
The left-hand side is independent of $n$ whereas the right-hand side is strictly decreasing in $n$.

\subsection{The upper bound}
We deal with the upper bound in the same manner and set $B = B_1$.
\begin{align}
	\tfrac{d}{d \eta}\ B(\tfrac{\eta}{n}, \tfrac{1}{\eta}) 
	&=e^{\frac{\eta}{n}}\csch(\tfrac{\eta}{n}) \coth(\tfrac{\eta}{2})\Big( 1+\tfrac{\eta}{n}-\tfrac{\eta}{n}\coth(\tfrac{\eta}{n}) -\eta\,\csch(\eta)\Big)
\end{align}
due to the addition theorems\footnotemark. We again analyze the expression in the brackets.
To begin with, $- x\csch(x)$ is a strictly increasing function.
A curve discussion\footnotemark 
\ and employing the strict monotonicity of $x\coth(x)$, and  $x\coth(x)\to 1$ for $x\to 0$,
shows that $1+x-x\coth(x)$ is a strictly concave, strictly monotonically increasing function. We now ensure the existence of a unique critical point of $B$:
\begin{equation}
\lim\limits_{\eta\rightarrow 0^+}1+\tfrac{\eta}{n}-\tfrac{\eta}{n}\coth(\tfrac{\eta}{n})-\eta\,\csch(\eta) = 1+0-1-1 = -1 <0,
\end{equation}
If we evaluate the same expression at $2n$, we get
\begin{equation}
1-2n\,\csch(2n)+\tfrac{2n}{n}-\tfrac{2n}{n}\coth(\tfrac{2n}{n}) \geq 3-2\coth(2)-2\csch(2) >0.
\end{equation}
We could again check whether $\eta = \arcsinh(n)$ is a critical point. From the general theory, though, we already know that it has to be between $\eta_{A,n}$ and $\eta_{B,n}$ and we have already established that it is greater than $\eta_{A,n}$.

\section{The two-sided exponential}\label{sec:2sided}
The last section is dedicated to the two-sided exponential. At first  sight, the bounds look quite similar to those already analyzed. However, this one has proven to be the most challenging to inspect. The current window function is
\begin{equation}
g_\gamma(t) = \sqrt{\gamma}\, e^{-\gamma |t|}, \quad t\in\R,
\end{equation}
where $\gamma>0$ is the dilation parameter. By Proposition \ref{prop:symplectic_invariance}, it suffices to consider the case $\gamma=1$. Unlike the one-sided exponential, the two-sided one is in Feichtinger's algebra, so we do not have a frame at the critical density. It has been shown in \cite{Jan96} that we obtain a frame for densities larger than 1. With the same parametrization $(\eta/n, 1/\eta)$ as in the previous sections, the frame bounds are given by
\begin{align}\label{eq:2sided_AB}
	\begin{split}
		A(\tfrac{\eta}{n},\tfrac{1}{\eta}) & = \tanh(\tfrac{\eta}{2})\Big(\tfrac{\eta}{n}\,\csch(\tfrac{\eta}{n})- \eta \, \csch(\eta)\Big) , \\
		B(\tfrac{\eta}{n},\tfrac{1}{\eta}) & = \coth(\tfrac{\eta}{2})\Big( \tfrac{\eta}{n}\,\coth(\tfrac{\eta}{n})+\eta\,\csch(\eta)\Big).
	\end{split}
\end{align}
\begin{figure}[h!tp]
	\subfigure[Lower frame bounds.]{
		\includegraphics[width=.45\textwidth]{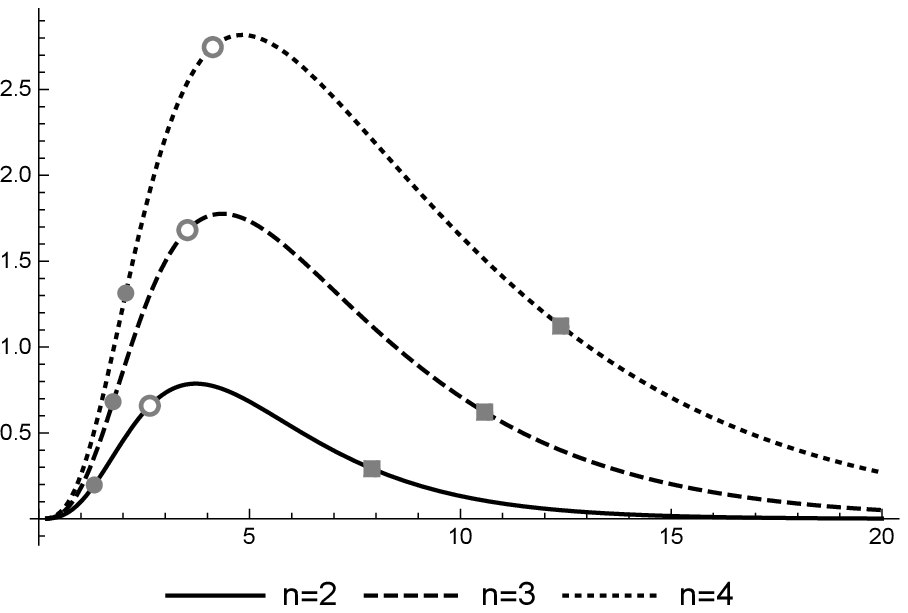}
	}
	\hfill
	\subfigure[Upper frame bounds.]{
		\includegraphics[width=.45\textwidth]{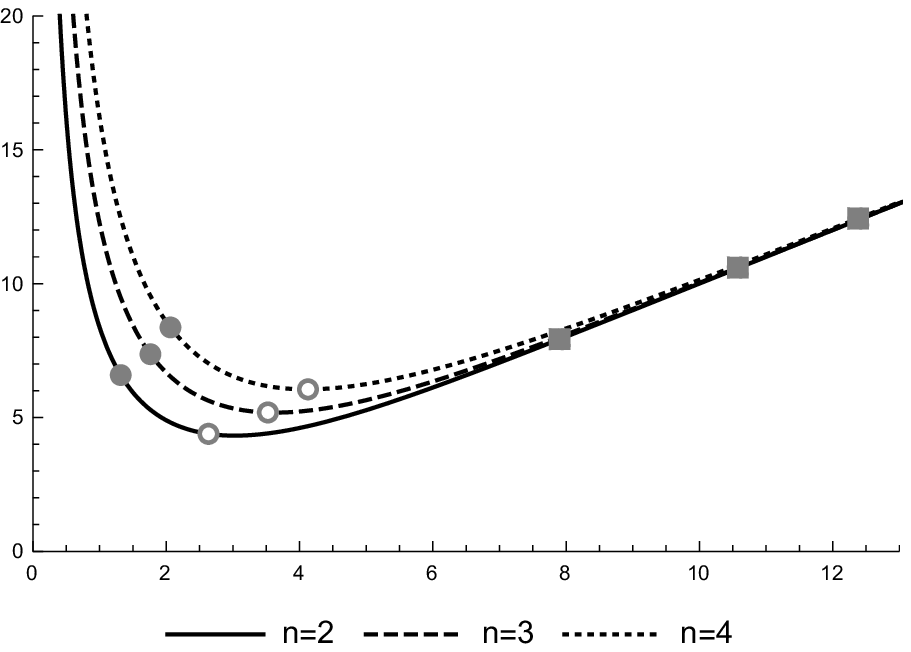}
	}
	\caption{The idea of the cut-offs for the lower and upper bound and parameter $\gamma=1$ and densities $n\in\{ 2, 3,4\}$. The $\textcolor{gray}{\bullet}$ and $\textcolor{gray}{\blacksquare}$ mark left and right cut-offs, respectively and $\textcolor{gray}{\circ}$ marks~$\eta_n$.}\label{fig:2sided_A_B}
\end{figure}

When one looks at the graphs of the lower or the upper bound for a fixed density in Figure \ref{fig:2sided_A_B}, it seems obvious that there is a unique extremum. However, the first and second derivative of the concerned bound are somewhat difficult to analyze over $(0,\infty)$, as they are combinations of products and sums which exhibit different behaviors at different intervals. The visually ``clear" situation is due to the interaction altogether, so dissecting each part separately leads to unsatisfactory estimates. Therefore, to determine the global extrema and to show their uniqueness, we will first exclude most of the positive half-axis through interval estimates (see Section \ref{sec:intervals} and Figure \ref{fig:2sided_A_B}) as yielding potential global extrema. Then for the lower bound, upper bound and condition number we will establish certain concavity, convexity and $\log$-convexity results, respectively, on the small remaining interval. These will allow us to conclude that the extremum is unique in each case. For the lower bound, the cases for small $n$, which is $3\leq n\leq 10$, and in particular $n=3$, will need more attention at some points compared to large $n$. The case $n=2$ needs to be handled completely separately, since it does not agree with the asymptotic positions of the extremal points. In the sequel, we will set
\begin{equation}\label{eq:2sided_f}
	f(t)=t \csch(t), \quad \forall t > 0,
\end{equation}
so as to have more compact expressions and we will repeatedly use the following:
$f$ is strictly monotonically decreasing and asymptotically satisfies\footnote{\textsuperscript{, \scriptsize{12}} \suppfoot}
\begin{equation}
 \lim_{x \to 0} x \csch(x) =1 \quad\text{and}\quad
 \lim_{x \to \infty} x \csch(x) = 0.
\end{equation}

\subsection{The lower bound}
The initial reference point will be $\eta_n=2\arccosh(n)$. We will first compare the value attained here with the function's values on $(0,\eta_n/2]$ and $[2\eta_n,\infty)$ in order to exclude global maxima in these intervals. Then we will show that there is a unique maximum in $(\eta_n/2, 2 \eta_n)$. By using \eqref{eq:2sided_f} and addition formulas, we write the lower frame bound evaluated at $\eta_n = 2 \arccosh(n)$ as 
\begin{equation}\label{eq:2sided_A_eta_n} 
	A(\tfrac{\eta_n}{n},\tfrac{1}{\eta_n}) = \frac{(n^2-1)^{1/2}}{n} \Big(f(\tfrac{\eta_n}{n}) - \frac{\eta_n}{2n (n^2-1)^{1/2}} \Big) = \frac{(n^2-1)^{1/2}}{n} f(\tfrac{\eta_n}{n}) - \frac{\arccosh(n)}{n^2}. 
\end{equation} 

\subsubsection{The left cut-off}
We will show that for any $3 \leq n \in \N$ the value of the lower bound achieved at $\eta_n = 2 \arccosh(n)$ is larger than any value attained in $(0, \eta_n/2]$. For small $n$ we will later only be able to show a concavity result for slightly smaller intervals (depending on $n$). The needed intervals are given below and we use interval estimates, as presented in Section \ref{sec:intervals}, to show that the maximum is still not attained on the mildly larger interval.

Before we begin with the comparison, we introduce a second function that will accompany us till the end:
\begin{equation}\label{eq:rho_n}
\rho(x)\coloneqq \frac{2\arccosh(x)}{x},\qquad x>0.
\end{equation}
This is a strictly monotonically decreasing function on $[2,\infty)$, which can easily be seen when computing $\rho'(x)$\footnotemark. Using the expression from \eqref{eq:2sided_AB} for $A$ evaluated at $\eta_n$ in combination with the notation \eqref{eq:2sided_f}, we have for all $\eta\leq \eta_n/2$ that
\begin{align}
	A(\tfrac{\eta}{n}, \tfrac{1}{\eta}) &< \tanh({\eta_n}/{4})\,\big(1-f({\eta_n}/{2})\big)
	=\Big(1 - \tfrac{2}{n+1}  \Big)^{1/2}  \,\Big(1 - \tfrac{\eta_n}{2(n^2-1)^{1/2}}\Big).
\end{align}
Using \eqref{eq:2sided_A_eta_n}, 
the desired inequality is
\begin{align}
	&\ \Big(1 - \frac{2}{n+1}  \Big)^{1/2}  \,\Big(1 - \frac{\eta_n}{2(n^2-1)^{1/2}}\Big)
	< \frac{(n^2-1)^{1/2}}{n}f(\tfrac{\eta_n}{n}) - \frac{\arccosh(n)}{n^2} \\
	\Longleftrightarrow &\ \Big(1-\frac{2}{n+1} \Big)^{1/2} - \frac{(n^2-1)^{1/2}}{n}f(\tfrac{\eta_n}{n})
	< \arccosh(n)\,\Big(\,\Big(1-\frac{2}{n+1}  \Big)^{1/2}\tfrac{1}{(n^2-1)^{1/2}}- \tfrac{1}{n^2}\Big).
\end{align}
We claim that the left-hand side is negative and the right-hand side is positive. We start with the negativity of the left-hand side:
\begin{equation}
	\Big(1 - \frac{2}{n+1}  \Big)^{1/2} -  \frac{(n^2-1)^{1/2}}{n}f(\tfrac{\eta_n}{n})\ <\,0 \qquad 
	\Longleftrightarrow \qquad\ \frac{n}{n+1}- f(\tfrac{\eta_n}{n})<0.
\end{equation}
For all $n\geq 3$, it will suffice to show
\begin{equation}
	1-\frac{3}{4 n}< f(\tfrac{\eta_n}{n}) \qquad
	\Longleftrightarrow  \qquad\Big( 1-\frac{3}{4 n}\Big) \frac{n}{\eta_n} - \csch(\tfrac{\eta_n}{n}) <0.
\end{equation}
We claim that the left side of the last inequality is strictly monotonically increasing in $n$. Recall that $\rho(n) = \eta_n/n$ is strictly monotonically decreasing on $[2,\infty)$. We differentiate the left-hand side of the last inequality with respect to $n$:
\begin{align}
	\frac{d}{d n} \left[\Big( 1-\frac{3}{4 n}\Big) \frac{n}{\eta_n} - \csch(\tfrac{\eta_n}{n}) \right]
	= \frac{3}{4 n\,\eta_n}+\frac{\eta_n - 2n \, (n^2-1)^{-1/2}}{\eta_n^2}\ \Big( \frac{\eta_n^2}{n^2} \csch'(\tfrac{\eta_n}{n})+1 - \frac{3}{4n}\Big).
\end{align}
The only part whose positivity has not been proven stands in the brackets. Keeping in mind that $\eta_n/n = \rho(n)\leq \rho(3) = \eta_3/3\leq 1.2$, we obtain
\begin{equation}
	\frac{\eta_n^2}{n^2} \csch'(\tfrac{\eta_n}{n})+1 - \frac{3}{4n} = \underbrace{1- \frac{3}{4n}}_{\nearrow} - \underbrace{\rho(n)^2 \csch(\rho(n))}_{\searrow}\ \underbrace{\left(\rho(n) \coth(\rho(n))-1 \right)}_{\searrow}.
\end{equation}
The atomic parts of the expression are all positive, so in total, the expression is strictly monotonically increasing. Evaluating at $n=3$ shows that the expression is always positive, therefore $\Big( 1-\tfrac{3}{4 n}\Big) \tfrac{n}{\eta_n} - \csch(\tfrac{\eta_n}{n}) $ is strictly monotonically increasing in $n$. Due to the fact that $1/t - \csch(t) \to 0$, for $t \to 0$, 
we have
\begin{equation}
	\lim\limits_{n\rightarrow \infty}\left[\Big( 1-\frac{3}{4 n}\Big) \frac{n}{\eta_n} - \csch(\tfrac{\eta_n}{n})\right] = 
	\lim\limits_{n\rightarrow \infty} \left[\frac{n}{\eta_n} - \csch(\tfrac{\eta_n}{n})  - \frac{3}{4 \eta_n} \right] = 0,
\end{equation}
implying that the desired expression is negative. We now examine the right-hand side of the initial inequality. As $\arccosh(x) > 0$ for all $x > 1$, we only need to establish
\begin{equation}
	\Big(1 - \frac{2}{n+1}  \Big)^{1/2}\frac{1}{(n^2-1)^{1/2}} \ > \ \frac{1}{n^2}
	\qquad \Longleftrightarrow \qquad
	\frac{1}{n+1}-\frac{1}{n^2}>0, \quad n\geq 3.
\end{equation}
All in all, we have shown that the maximum of $A$ cannot be attained in the interval $(0,\eta_n/2]$.

Now, as mentioned above, we will need to improve our estimates for $n=3, \ldots ,10$ in order to show the necessary concavity/convexity later on.
\begin{itemize}
	\item ($4\leq n\leq 10$): we compare the value of $\tanh(\eta/2)(1-f(\eta))$ (which majorizes to the lower bound) at $\eta = 3.08$ with that of $A(\eta_n/n,1/\eta_n)/n$. We see that there is no maximum in $(0,3.08] \supset (0, \eta_n/2]$.
	\item ($n=3$): this case is a bit more subtle. Here, we perform an interval estimate on $(1.7, 3.3)\supset(\eta_3/2,3.3)$ with sub-intervals of length $0.3$ to see that the global maximum does not lie in the interval $(0, 3.3]$.
\end{itemize}

\subsubsection{The right cut-off}
Similarly to the left cut-off, we will show that for $3 \leq n \in \N$ the lower bound $A$ does not assume its global maximum on $[2\eta_n, \infty)$. This is again achieved by comparing with the value at $\eta_n$ and with extra estimates for small $n$. By using the properties of $f(t)$, defined by Equation \eqref{eq:2sided_f}, and the fact that $\tanh(\eta) < 1$, we have
\begin{equation}
	A(\tfrac{\eta}{n}, \tfrac{1}{\eta}) < f(\tfrac{2\eta_n}{n}) = \tfrac{2\eta_n}{n} \csch(\tfrac{2\eta_n}{n}) = \tfrac{\eta_n}{n} \csch(\tfrac{\eta_n}{n})\sech(\tfrac{\eta_n}{n}) =f(\tfrac{\eta_n}{n})\sech(\tfrac{\eta_n}{n}) .
\end{equation}
As already introduced Equation \eqref{eq:rho_n}, we use $\rho(n)=\frac{2\arccosh(n)}{n}$. This leaves us with showing the following inequality\footnote{\ \suppfoot}:
\begin{align}
	& \frac{(n^2-1)^{1/2}}{n}f(\rho(n)) - \frac{\rho(n)}{2n} > f(\rho(n))\sech(\rho(n))  \\
	\Longleftrightarrow \qquad &
	(n^2-1)^{1/2}\cosh(\rho(n))-n > \frac{\sinh(2\rho(n))}{4}.
\end{align}
We want to show that
\begin{equation}
	(n^2-1)^{1/2}\cosh(\rho(n))-n - \tfrac{\sinh(2\rho(n))}{4} >0.
\end{equation}
We first assume that $n\geq 10$ and bound $\cosh$ from below and $\sinh$ from above. For $\sinh$, we note that $2\rho(n)\leq 2\rho(10) <1.2$ (as $n \geq 10$), and use $\sinh(x)<x+x^3$ for $x\in[0,1.2]$:
\begin{align}
	& \, (n^2-1)^{1/2}\cosh(\rho(n))-n - \tfrac{\sinh(2\rho(n))}{4} >
	(n^2-1)^{1/2}\Big( 1+ \tfrac{\rho(n))^2}{2}\Big)-n - \tfrac{2\rho(n)+(2\rho(n))^3}{4} \\
	= & \, (n^2-1)^{1/2} - n +(n^2-1)^{1/2}\tfrac{\rho(n))^2}{2} - \rho(n)-4\rho(n)^3. 
\end{align}
We claim that $(n^2-1)^{1/2}-n > -\tfrac{1}{n}$ and that the remaining part dominates $\tfrac{1}{n}$. The former can be verified with curve discussion\footnote{\ \suppfoot}.
We are left to prove
\begin{align}
	\tfrac{1}{n} & < \rho(n) \Big( (n^2-1)^{1/2}\tfrac{\rho(n))}{2} - 1-4\rho(n)^2\Big)\\
	& = \tfrac{2\arccosh(n)}{n} \Big( \tfrac{(n^2-1)^{1/2}}{n}\arccosh(n)) - 1-4\rho(n)^2\Big) \\
	\Longleftrightarrow
	1 & < 2 \underbrace{\arccosh(n)}_{\nearrow, \, >0}\Big( \underbrace{\Big(1-\tfrac{1}{n^2}\Big)^{1/2}}_{\nearrow, \, >0} \underbrace{\arccosh(n)}_{\nearrow, \, >0} - 1- \underbrace{4\rho(n)^2}_{\searrow, \, >0}\Big).
\end{align}
In summary, for $n \geq 10$, the expression in the brackets is strictly monotonically increasing. Evaluating at $10$ shows that it is also positive for all $n\geq 10$. Therefore, the right-hand side of the above inequality is strictly monotonically increasing. Finally, the evaluation at $n =10$ also shows the claimed inequality. For the initial $3\leq n\leq 9$ we evaluated the initial inequality to verify it.

Again, the intervals where we look for the maximum need to be updated for $n\leq 10$.
\begin{itemize}
	\item ($4 \leq n \leq 10$): we ensure that there is no global maximum in $[4n/3, 2 \eta_n)$. One can check this with a direct interval estimate for $5\leq n\leq 10$. In the case $n=4$ we split it in the intervals $[16/3, 5)$ and $[5, 2 \eta_4)$.
	\item ($n=3$): we divide $[4.8, 7.1) \supseteq[4.8, 2 \eta_3)$ in sub-intervals of length $0.1$ to show that there is no global maximum in this interval.
\end{itemize}	
Summing up all the cases from the left cut-off and right cut-off, we are left to show that there is a unique (global) maximum in the interval (see also Figure \ref{fig:I_n})
\begin{equation}\label{eq:intervals}
	I_n = \left(\max\{3.08, \tfrac{\eta_n}{2}\}, \min\{\tfrac{4n}{3},2\eta_n\}\right), \quad 4 \leq n \in \N
	\quad \text{ or } \quad
	I_n = (3.3, 4.8), \quad n = 3.
\end{equation}
\begin{figure}[h!tp]
	\includegraphics[width=.65\textwidth]{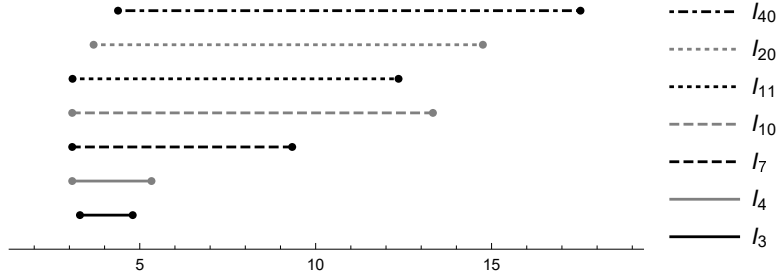}
	\caption{Illustration of the intervals $I_n$ for $n \in \{3,4,7,10,11,20,40\}$. We have the intervals $I_3=(3.3,4.8)$, $I_n=(3.08, 4n/3)$ for $4 \leq n \leq 10$ and $I_n = (\eta_n/2, 2 \eta_n)$ for $n \geq 11$.}\label{fig:I_n}
\end{figure}

\subsubsection{The concavity result}
We fix $n\geq 3$ and set $\psi_n(\eta) = \psi(\eta) = f(\eta/n)-f(\eta)$, where $f(t) = t \csch(t)$ is still defined by \eqref{eq:2sided_f}. The lower frame bound can then be expressed as
\begin{equation}
	 A(\tfrac{\eta}{n}, \tfrac{1}{\eta}) = \tanh(\tfrac{\eta}{2}) \, \psi(\eta).
\end{equation}
We want to show that $A$ has a unique maximum in $(\eta_n/2, 2 \eta_n)$. As $A$ is strictly positive, we may perform the analysis for $\log(A)$, which has the same critical points. We compute
\begin{equation}
	\tfrac{d^2}{d \eta^2} \left[ \log\left(
	A(\tfrac{\eta}{n},\tfrac{1}{\eta})\right) \right] = \frac{\tanh''(\tfrac{\eta}{2}) \tanh(\tfrac{\eta}{2}) - \tanh'(\tfrac{\eta}{2})^2}{4 \tanh(\tfrac{\eta}{2})^2} + \frac{\psi''(\eta) \psi(\eta) - \psi'(\eta)^2}{\psi(\eta)^2}.
\end{equation}
Since, $\tanh$ is positive, strictly increasing and strictly concave and $\psi$ is positive ($f$ is strictly monotonically decreasing), it suffices to show that $\psi$ is concave in order to have the whole expression negative. We compute its second derivative, which we may compactly write as
\begin{align}
	\quad\psi''(\eta) = \tfrac{1}{n^2}f''(\tfrac{\eta}{n}) -  f''(\eta), \quad \eta \in (\eta_n/2, 2 \eta_n).
\end{align}
Thus, we turn back to the properties of $f$, more specifically, 
$f''(t)$. It can be shown\footnote{\ \suppfoot} that $f''$ is strictly negative on $(0, 1.6]$ and strictly positive on $[1.61, \infty)$.  Now, recall that
\begin{equation}
	\eta \in (\eta_n/2, 2 \eta_n)
	\quad \text{ with } \quad
	\eta_n = 2 \arccosh(n).
\end{equation}
We observe that ${\eta_n}/{2} \geq {\eta_3}/{2} > 1.76$. As a consequence, we have that $f''(\eta) > 0$ for all $\eta \geq \eta_n/2 > 1.61$. Next, recall that $\rho(x) = 2 \arccosh(x)/x$ is strictly decreasing for $x > 2$. Also, $\eta/n < 2\rho(n)$ and $2\rho(7) < 1.51$. In particular, $f''(\eta/n) < 0$ for all $\eta \leq 2 \eta_n$ and $n \geq 7$. Therefore, $\psi''(\eta) <0 $ for all $n \geq 7$. We have shown, for $n\geq 7$, that:
\begin{itemize}
\item $A(\eta_n/n, 1/\eta_n) > A(\eta/n,1/\eta)$, for all $\eta \in (0,\eta_n/2] \cup [2\eta_n, \infty)$.
	\item $\log(A(\eta/n,1/\eta))$ is strictly concave in $\eta$ on $(\eta_n/2, 2 \eta_n)$ and therefore has a unique maximum on this interval. As $\log(A(\eta/n,1/\eta))$ and $A(\eta/n,1/\eta)$ have maxima at the same positions, it follows that $A(\eta/n,1/\eta)$ has a unique maximum in $(\eta_n/2, 2 \eta_n)$.
\end{itemize}

The fact that $\eta/n$ needs to be less than 1.6 makes it necessary to shorten the intervals appropriately for $3 \leq n \leq 6$. This is achieved by using the intervals defined in \eqref{eq:intervals}. To that, due to the position of the maximum, this does not work for $n=2$.

All in all, for $n \geq 3$ the lower frame bound has a unique global maximum in $\eta_{{}_{A,n}} \in (\eta_n/2, 2\eta_n)$.

\subsection{The upper bound}\label{sec:2sided_B}
We carry on with the same tactic, i.e., we evaluate $B$ given by \eqref{eq:2sided_AB} at $\eta_n/2$, $\eta_n$ and $2\eta_n$ and obtain a convexity result to see that the minimum lies in $(\eta_n/2, 2 \eta_n)$.
\subsubsection{The convexity}\label{sec:2sided_B_convex}
This time we start with the convexity of the bound, which actually holds on $\R_+$. It then follows from the asymptotic behavior that we have a unique minimum.
\begin{align} 
	B(\tfrac{\eta}{n},\tfrac{1}{\eta}) &>  \, (\tfrac{\eta}{n} \coth (\tfrac{\eta}{n}) +0) \to \infty, \quad \eta \to \infty,\\
	B(\tfrac{\eta}{n},\tfrac{1}{\eta}) &> \coth(\tfrac{\eta}{2}) \, (1 +0) \to \infty, \quad \eta \to 0^+.
\end{align}
We may already include the case $n=2$ this time. For a fixed $n \geq 2$, we set 
\begin{equation}\label{eq:2sided_B_aux_h}
	h(\eta) = \tfrac{\eta}{n} \coth(\tfrac{\eta}{n}) + \eta \csch(\eta).
\end{equation}
The first two derivatives of $h$ are given by\footnote{\textsuperscript{, \scriptsize{17}, \scriptsize{18}} \suppfoot}
\vspace*{-0.6cm}
\begin{align}
	h'(\eta) & = \tfrac{h(\eta)}{\eta} - \eta\,\Big( \tfrac{1}{n^2}\csch(\tfrac{\eta}{n})^2+\csch(\eta)\coth(\eta)\Big),\\[-10pt]
	h''(\eta) & = - 2\big(\tfrac{1}{n^2}\csch(\tfrac{\eta}{n})^2+\csch(\eta)\coth(\eta) \big) + \eta\big(\tfrac{2}{n^3} \csch(\tfrac{\eta}{n})^2\coth(\tfrac{\eta}{n}) +\csch(\eta)+2\csch(\eta)^3\big).
\end{align}
We apply the Leibniz rule and obtain\footnotemark
\begin{align}
	\tfrac{d^2}{d \eta^2} \, \left[B(\tfrac{\eta}{n},\tfrac{1}{\eta}) \right]
	& =\tfrac{1}{2}\coth(\tfrac{\eta}{2})\csch(\tfrac{\eta}{2})^2\Big(\tfrac{\eta}{n} \coth(\tfrac{\eta}{n}) + \eta \csch(\eta)\Big) \\
	& \quad -\csch(\tfrac{\eta}{2})^2\Big(\tfrac{1}{n} \coth(\tfrac{\eta}{n}) + \csch(\eta) -\eta\big(\tfrac{1}{n^2}\csch(\tfrac{\eta}{n})^2+\csch(\eta)\coth(\eta) \big)\Big) \\
	& \quad +\coth(\tfrac{\eta}{2}) \Big(- 2\big(\tfrac{1}{n^2}\csch(\tfrac{\eta}{n})^2+\csch(\eta)\coth(\eta) \big) + \\
	& \quad \phantom{+\coth(\tfrac{\eta}{2})} \quad \quad \eta\big(\tfrac{2}{n^3} \csch(\tfrac{\eta}{n})^2\coth(\tfrac{\eta}{n}) +\csch(\eta)+2\csch(\eta)^3\big)\Big).
\end{align}
First, we deal with the parts independent of $n$. For uniformity, all hyperbolic functions will be transformed to an argument $\eta/2$ by using addition formulas. 
Set
\begin{align}
	T(\eta) & = \csch(\tfrac{\eta}{2})^2 \cdot \Big(\tfrac{1}{2}\coth(\tfrac{\eta}{2})\eta \csch(\eta)-\csch(\eta) +\eta\csch(\eta)\coth(\eta) \Big)\\
	& \quad + \coth(\tfrac{\eta}{2}) \Big(- 2\csch(\eta)\coth(\eta) +\eta\csch(\eta)+2\eta\csch(\eta)^3\Big).
\end{align}
With addition formulas for hyperbolic functions, this can be written as\footnotemark
\begin{align}
	T(\eta)	&  = \tfrac{1}{4} \csch( \tfrac{\eta}{2})^2 \, \cdot \eta \csch(\tfrac{\eta}{2})^2\cdot \sum\limits_{k=1}^\infty \eta^{2k} \tfrac{2k-1}{(2k+1)!} >0.
\end{align}
So, the part independent of $n$ is positive. If we can show the same for the remaining terms of the second derivative, then we have proven the strict convexity of the bound on its entire domain. We collect the terms dependent on $n$ and set
\begin{align}
	T_n(\eta) & = \tfrac{1}{2}\cdot\tfrac{\eta}{n}\coth(\tfrac{\eta}{2})\csch(\tfrac{\eta}{2})^2 \coth(\tfrac{\eta}{n})-\tfrac{1}{n} \csch(\tfrac{\eta}{2})^2 \coth(\tfrac{\eta}{n}) 
	+\tfrac{\eta}{n^2}\csch(\tfrac{\eta}{2})^2\csch(\tfrac{\eta}{n})^2 \\
	& \quad - \tfrac{2}{n^2}\coth(\tfrac{\eta}{2})\csch(\tfrac{\eta}{n})^2 +\tfrac{2\eta}{n^3}\coth(\tfrac{\eta}{2}) \csch(\tfrac{\eta}{n})^2\coth(\tfrac{\eta}{n}).
\end{align}
This can be written as
\begin{align}
	T_n(\eta) & =\underbrace{\tfrac{1}{n}\csch(\tfrac{\eta}{2})^2 \coth(\tfrac{\eta}{n})}_{>0} \underbrace{\Big(\tfrac{\eta}{2}\coth(\tfrac{\eta}{2})-1\Big)}_{>0}+\underbrace{\tfrac{\eta}{n^2}\csch(\tfrac{\eta}{2})^2\csch(\tfrac{\eta}{n})^2}_{>0}\\
	& \quad + \underbrace{\tfrac{2}{n^2}\coth(\tfrac{\eta}{2})\csch(\tfrac{\eta}{n})^2}_{>0}\underbrace{\Big(\tfrac{\eta}{n}   \coth(\tfrac{\eta}{n}) -1\Big)}_{>0}>0.
\end{align}
All in all, the upper frame bound is strictly convex and tends to infinity when approaching the boundary of its domain, which is $\R_+$. Therefore, it has a unique minimizer $\eta_{{}_{B,n}}$, as claimed. By comparing the values of $B$ for $\eta \in \{\eta_n/2, \eta_n, 2\eta_n\}$, one sees that $\eta_{B,n}$ has to lie in $I_n$. Since the points are chosen to allow for the addition formulas, the desired inequalities
\begin{equation}
	B(\tfrac{\eta_n}{n},\tfrac{1}{\eta_n})< B(\tfrac{\eta_n}{2n},\tfrac{2}{\eta_n}) \qquad \text{and}\qquad B(\tfrac{\eta_n}{n},\tfrac{1}{\eta_n})< 	B(\tfrac{2\eta_n}{n},\tfrac{1}{2\eta_n}) 
\end{equation} 
are, after a few equivalence transformations, reduced to easily verifiable inequalities and we omitted them from the manuscript\footnote{\textsuperscript{, \scriptsize{20}} \suppfoot}. 
We remark already that for the first $3\leq n\leq 10$ we will use the intervals $I_n$ defined in \eqref{eq:intervals} in order to obtain the needed concavity results for the condition number. It will suffice to evaluate the first derivative on the boundaries of $I_n$.

\subsection{The condition number}
Unlike for all other windows, things seem to get only more complicated with the condition number. It is given by
\begin{equation}
	\kappa(\tfrac{\eta}{n}, \tfrac{1}{\eta}) = \frac{B(\frac{\eta}{n})}{A(\frac{\eta}{n})} = \coth(\tfrac{\eta}{2})^2 \, \frac{\tfrac{\eta}{n}\coth(\tfrac{\eta}{n}) +\eta\csch(\eta)}{\tfrac{\eta}{n} \csch(\tfrac{\eta}{n})-\eta \csch(\eta)}.
\end{equation}
We try to use maximally what we already know and to simplify the problem of determining an optimal lattice as much as possible. By the asymptotic behaviour of $A$ and $B$, we know that $\kappa$ is unbounded towards 0 and $\infty$. Showing strict convexity would therefore ensure that we have a unique minimum. This turns out to be a rather difficult problem due to the algebraic nature of the condition number. We simplify the problem by showing the strict log-convexity on $I_n$, defined by \eqref{eq:intervals}, and that there is no minimum outside. We compute
\begin{equation}
	\log(\kappa(\tfrac{\eta}{n}, \tfrac{1}{\eta})) = -\log(A(\tfrac{\eta}{n}, \tfrac{1}{\eta})) + \log(B(\tfrac{\eta}{n}, \tfrac{1}{\eta})) = - \log( A(\tfrac{\eta}{n}, \tfrac{1}{\eta})) - 2\log(\tanh(\tfrac{\eta}{2})) + \log(h(\eta)),
\end{equation}
where, for $3 \leq n \in \N$ fixed,
\begin{equation}
	h(\eta) = \tfrac{\eta}{n} \coth(\tfrac{\eta}{n}) + \eta \csch(\eta),
\end{equation}
is still defined by Equation \eqref{eq:2sided_B_aux_h}, just as in the proof of the convexity of the upper bound. We know that $\tanh(x)$ is strictly log-concave for $x>0$ and that the lower bound is strictly log-concave on $I_n$. If we can show that $h$ is log-convex on $I_n$, then we have proven that $\kappa$ is strictly log-convex, hence strictly convex, on $I_n$. As $A$ does not have a maximum outside $I_n$ and $B$ does not possess a minimum there, the minimum in $I_n$ has to be global and, by convexity, unique. So, we want to show 
\begin{equation}
	\tfrac{d^2}{d \eta^2} \log(h(\eta)) > 0, \quad \eta \in I_n
	\quad \Longleftrightarrow \quad
	h(\eta) h''(\eta) - h'(\eta)^2 >0, \quad \eta \in I_n.
\end{equation}

We continue by using the explicit formulas for $h$, $h'$ and $h''$ and, then further simplifying the expression by grouping terms and using the addition theorems for hyperbolic functions. 
The computations need very good intuition of how to group the terms efficiently, making 
this part rather laborious and quite lengthy. However, as each step only needs elementary manipulations of hyperbolic functions, we omit the details at this point\footnotemark.
The result to further analyze is
\vspace*{-0.6cm}
\begin{align}
	h(\eta) h''(\eta) - h'(\eta)^2
	& = \frac{\csch(\tfrac{\eta}{n})^2}{n^2} \underbrace{ \left( (\tfrac{\eta}{n})^2\coth(\tfrac{\eta}{n})^2 - \cosh(\tfrac{\eta}{n})^2 +(\tfrac{\eta}{n})^2 \right)}_{(1)}\\  
	& \quad + \tfrac{1}{n} \coth(\tfrac{\eta}{n}) \csch(\eta) \Big(\eta \underbrace{\big(\eta-\tfrac{2\eta}{n} \coth(\tfrac{\eta}{n})\coth(\eta)\big)}_{(2)} +2\underbrace{\big((\tfrac{\eta}{n})^2\coth(\tfrac{\eta}{n})^2-1\big)}_{>0} \Big) \\ 
	& \quad + \tfrac{2\eta}{n}\csch(\eta)\Big(
	\tfrac{\eta\csch(\eta)}{2}\underbrace{\big( \cosh(\eta) - \tfrac{n}{\eta^2} \big)}_{(3)} +\underbrace{\tfrac{\eta}{2}\coth(\eta) - \tfrac{\eta}{n}\coth(\tfrac{\eta}{n})}_{(4)}\Big)\\
	& \quad + \underbrace{\csch(\eta)^2 \big(\tfrac{2\eta^2}{n}\coth(\tfrac{\eta}{n})\csch(\eta)+\eta^2\csch(\eta)^2\big) }_{>0}.
\end{align}
Assuming $n\geq 4$, one can check in a straightforward manner\footnote{\textsuperscript{, \scriptsize{22}, \scriptsize{23}} \suppfoot} that all of the expressions (1) -- (4) in the above calculation are positive on $I_n$.

\subsubsection{The case $n=3$}
 
For the interval $I_3 = (3.3, 4.8)$ we make the following three claims:
\begin{equation}
	h \geq 1.6
	\quad \text{ and } \quad
	|h'| \leq 0.22
	\quad \text{ and } \quad
	h'' > 0.
\end{equation}
They can easily be verified\footnotemark\ with interval estimates on subintervals of $I_3$ of length $0.1$.
With these properties of $h$,
we can estimate as follows:
\begin{align}
h(\eta)h''(\eta) - h'(\eta)^2 & > 1.6 h''(\eta) - 0.22^2 \\
	& = \tfrac{3.2}{9}\csch(\tfrac{\eta}{3})^2\big(\tfrac{\eta}{3}\coth(\tfrac{\eta}{3})-1\big)\\
	& \quad +3.2\csch(\eta)\big(\eta\csch(\eta)^2+\tfrac{\eta}{2}-\coth(\eta)\big) -0.0484\\
	& > \tfrac{3.2}{9} \underbrace{\tfrac{\eta}{3}\csch(\tfrac{\eta}{3})\cdot \csch(\tfrac{\eta}{3})\coth(\tfrac{\eta}{3})}_{\searrow } - \tfrac{3.2}{9}\cdot 0.64\, \underbrace{\csch(\eta)}_{\searrow}-0.0484>0.
\end{align}
The final estimate can be confirmed by an interval estimate on subintervals of length $0.1$. In conclusion, the condition number is strictly $\log$-convex, hence strictly convex, on $(3.3, 4.8)$, where the minimum lies. Therefore, the minimum is unique on in this interval and by what we know about the bounds $A$ and $B$ outside this interval, the minimum has to be global.

\subsection{The case \texorpdfstring{$n=2$}{n=2}}
This case needs to be treated separately as the necessary adjustments would go beyond simple interval shrinkage.
On the other hand, this case has an additional symmetry, as the arguments $\eta/2$ and $\eta/n$ now coincide. This allows us to manipulate the relevant functions more easily with the addition theorems for hyperbolic functions.
Since the expressions to analyze are simplifications of those already considered above, we omit the estimates\footnotemark\ and summarize the important observations. The lower bound attains its unique maximum $\eta_{A,2}$ in $(3.7, 3.74)$, the upper bound attains its minimum $\eta_{B,2}$ in $(3, 3.1)$ and the condition number is uniquely minimizes between these extremal points: $\eta_{B,2}<\eta_{2}<\eta_{A,2}$.

\subsection{Three optimal points}
Lastly, we want to show that the locations of the optimum for the lower bound, upper bound and condition number differ from each other. As we already know that the optimum is unique in each case, it suffices to show that $A$ and $B$ do not assume their optima at the same point. By recalling \eqref{eq:A'=B'}, it is immediate that also $\kappa$ cannot have its optimum at one of these positions.

\subsubsection{The lower bound} 
We are now in a position to determine the location of the maximum a bit better. We will show that $\eta_{{}_{A,n}} >\eta_n$. It suffices to show that the first derivative of $A$ is positive at $\eta_{n}$. Since $f$ is strictly monotonically decreasing, we have
\begin{align}
	 \tfrac{d}{d \eta}A(\tfrac{\eta}{n}, \tfrac{1}{\eta})\Bigr|_{\eta=\eta_n}& =  \tfrac{1}{2} \sech(\tfrac{\eta_n}{2})^2\underbrace{\big( f(\tfrac{\eta_n}{n}) - f(\eta)\big)}_{>0} + \tanh(\tfrac{\eta_n}{2})\big(\tfrac{1}{n}f'(\tfrac{\eta_n}{n}) - f'(\eta_n)\big) \\
	& > \tanh(\tfrac{\eta_n}{2})\big(\tfrac{1}{n}f'(\tfrac{\eta_n}{n}) - f'(\eta_n)\big).
\end{align}
By the definition of $f$ (Equation \eqref{eq:2sided_f}), this is strictly positive if and only if
\begin{align}
	0 & < n\csch(\tfrac{\eta_n}{n})\big(1-\tfrac{\eta_n}{n}\coth(\tfrac{\eta_n}{n})\big)- n^2\csch(\eta_n)(1-\eta_n\coth(\eta_n)).
\end{align}
We make several observations. First of all, the addition formulas can be applied wherever the term $\eta_n = 2 \arccosh(n)$ appears. Secondly, the parts involving $\eta_n/n$ are not as straightforward to analyze, so we transform the parameter to $t = \rho(n) =\eta_n/n$ and use that it is strictly monotonically decreasing in $n\geq 2$. 
We further manipulate the right-hand side of the above inequality with the mentioned addition formulas and substitutions to an equivalent inequality;
\begin{align}
	0&< n t\big(\tfrac{\csch(t)}{t}-\csch(t)\coth(t)+\tfrac{1}{2}\big) -\tfrac{1}{2}\big(1+ \tfrac{1}{ n^2-1}\big)^{1/2}+ \tfrac{n\, t}{4(n^2-1)}
= (\otimes).
\end{align}
As usual, we decompose the expression $(\otimes)$ into parts;
\begin{align}
	\tfrac{\csch(t)}{t}-\csch(t)\coth(t)+\tfrac{1}{2}
	& >\csch(t)^2 \big(\tfrac{\sinh(t)}{t} -1\big)
	= \tfrac{\csch(t)}{t} - \csch(t)^2.\\
\end{align}
We will now show that the last expression is strictly monotonically decreasing for all $t >0$;
\begin{align}
	\tfrac{d}{d t } \left[\tfrac{\csch(t)}{t} - \csch(t)^2 \right]	& = -\tfrac{\csch(t)^4}{t^2}\,
	\sum\limits_{k=2}^\infty\tfrac{4\,t^{2k+2}}{(2k+2)!}
	\Big((k+2)(2^{2k-1} - 2k-1) +(2k+1) \Big).
\end{align}
It is not hard to convince oneself of the positivity of
\begin{equation}
	(k+2)(2^{2k-1} - 2k-1) +(2k+1), \quad k \geq 2,
\end{equation} 
through the behavior of $2^{x-1}-x$ for $x \geq 4$. Therefore, 
\begin{align}
	\tfrac{\csch(\rho(n))}{\rho(n)} - \csch(\rho(n))^2 \geq \tfrac{\csch(\rho(2))}{\rho(2)} - \csch(\rho(2))^2 > 0, \quad \forall n \geq 2.
\end{align}
So, $\csch(t)/t-\csch(t)^2$ is decreasing for $t > 0$, as claimed. We go back to proving positivity of the expression $(\otimes)$.
\begin{align}
	(\otimes)
	& > n t \csch(t) \big(\tfrac{1}{t} -\csch(t)\big) -\tfrac{1}{2}\big(1+\tfrac{1}{ n^2-1}\big)^{1/2} \\
	&= \underbrace{2\arccosh(n)}_{\nearrow}\underbrace{\big(\tfrac{ \csch(\rho(n)) }{\rho(n)} - \csch(\rho(n)) ^2\big)}_{>0,\,\nearrow} \underbrace{-\tfrac{1}{2}\big(1+ \tfrac{1}{ n^2-1}\big)^{1/2}}_{ \nearrow}.
\end{align}
We evaluate at $n =5$ to verify the positivity of $(\otimes)$ for all $n \geq 5$ by the monotonicity of all terms as indicated above. For the initial $n=2,3,4$, we actually evaluate the derivative of $A$ at $\eta_n$ and conclude that 
\begin{equation}
	\tfrac{d}{d \eta}A(\tfrac{\eta}{n}, \tfrac{1}{\eta})\Bigr|_{\eta=\eta_n} >0
\end{equation}
for all $n\geq 2$. This allows us to conclude
$\eta_{{}_n}<\eta_{{}_{A,n}}$ for all $n\geq 2$.

\subsubsection{The upper bound}
We finalize by showing that $\eta_{{}_{B,n}}\neq \eta_{{}_{A,n}}$. In, fact we show a stronger statement: $\eta_{{}_{B,n}}<\eta_{{}_n}$ for $n\geq 4$. 
Using the calculations for $h$ as defined in \eqref{eq:2sided_B_aux_h}, and $\eta_n = n\rho (n) = 2\arccosh(n)$, we compute\footnote{\ \suppfoot}
\begin{align}\label{eq:3points_aux_B}
	\tfrac{d}{d \eta} 
	B(\tfrac{\eta}{n}, \tfrac{1}{n}) \Bigr|_{\eta=\eta_n}
	& = \tfrac{1}{2\,(n^2-1)} \big(1 -\tfrac{\eta_n}{n} \tfrac{1}{2(n^2-1)^{1/2}} -\tfrac{\eta_n}{n} \coth(\tfrac{\eta_n}{n}) \big)\\
	& \quad + \tfrac{1}{(n^2-1)^{1/2}}
	\bigg(
		\underbrace{\coth(\tfrac{\eta_n}{n})	- \tfrac{\eta_n}{n}\csch(\tfrac{\eta_n}{n})^2 -\tfrac{\eta_n}{2n}}_{=\text{(I)}} + \underbrace{\tfrac{\eta_n}{n}\tfrac{1}{4\,(n^2-1)}}_{\geq 0}
	\bigg).
\end{align}
If the last expression, and hence the derivative of the upper frame bound at $\eta_n$, is positive, then  we can  conclude that $\eta_{{}_{B,n}} < \eta_n$. We will first prove this for $n \geq 4$.

We start with two substitutions in (I), namely $\csch^2 = \coth^2-1$ and $t = \eta_n/n$;
\begin{align}
	\coth(t) - t \csch(t )^2 -\tfrac{t }{2} +\tfrac{t }{4\,(n^2-1)} > \coth(t )- t \coth(t )^2 +\tfrac{t }{2} -\tfrac{t }{14}, \quad t > 0, \, n \geq 2.
\end{align}
We will now study the properties of this new lower bound. We compute\footnote{\textsuperscript{, \scriptsize{26}, \scriptsize{27}} \suppfoot}
\begin{align}
	\tfrac{d^2}{d t^2} \left[ \coth(t)- t \coth(t)^2 +\tfrac{t}{2} - \tfrac{t}{14}\right]
	&=-\csch(t)^4\,\sum\limits_{k=1}^\infty2k \, \tfrac{(2t)^{2k+1}}{(2k+1)!} < 0, \quad \forall t > 0.
\end{align}
This shows that the expression $\coth(t )- t\coth(t )^2 +\tfrac{t}{2} - \tfrac{t}{14}$ is strictly concave on $\R_+$. We observe that, $\coth(1.05)- 1.05\coth(1.05)^2 +\tfrac{3\cdot 1.05}{7} >0$. Additionally\footnotemark,
\begin{align}
	\lim\limits_{t\rightarrow 0^+}\coth(t)- t\coth(t )^2 +\frac{t}{2} - \frac{t}{14} 
	& = \lim\limits_{t\rightarrow 0^+}\ \sum\limits_{k=1}^{\infty}\tfrac{2^{2k+1}\,(1-k)}{(2k+1)!}t^{2k-1} = 0,
\end{align}
due to the convergence of the series. So, on $(0, 1.05)\supseteq (0, \rho(n))$ and for $n\geq 4$, this implies 
\begin{equation}
	\coth(t )- t \coth(t )^2 +\tfrac{t }{2} > \tfrac{t}{14}.
\end{equation}
We 
establish\footnotemark\ a lower bound for \eqref{eq:3points_aux_B} and $n \geq 4$.
\begin{align}
	\tfrac{d}{d \eta} 
	B(\tfrac{\eta}{n}, \tfrac{1}{n}) \Bigr|_{\eta=\eta_n}
	&> \tfrac{1}{2\,(n^2-1)} \big(1 -\underbrace{\tfrac{\rho(n)}{2(n^2-1)^{1/2}}}_{\searrow} -\underbrace{\rho(n)\coth(\rho(n))}_{\searrow} +\underbrace{\big(1-\tfrac{1}{n^2}\big)^{1/2}}_{\nearrow}\underbrace{\tfrac{2\arccosh(n)}{7}}_{\nearrow}\big).
\end{align}

By evaluating at $n=4$, we obtain
\begin{align}
	\tfrac{d}{d \eta} 
	B(\tfrac{\eta}{n}, \tfrac{1}{n}) \Bigr|_{\eta=\eta_n}
	&>\tfrac{1}{2\,(n^2-1)} \big(1 -\tfrac{\rho(4)}{2(4^2-1)^{1/2}} -\rho(4)\coth(\rho(4)) +\big(1-\tfrac{1}{4^2}\big)^{1/2} \, \tfrac{2\arccosh(4)}{7}\big) >0.
\end{align}

Hence, $\eta_{{}_{B,n}}< \eta_n <\eta_{{}_{A,n}}$ for all $n \geq 4$.
For $n=2,3$ we have $\eta_{{}_{B,n}}>\eta_n$. For $n=2$, we have already established that $\eta_{{}_{B,2}} <3.2 < \eta_{{}_{A,2}}$. As far as $n=3$ is concerned, 
\begin{equation}
	\tfrac{d}{d \eta}A(\tfrac{\eta}{3}, \tfrac{1}{\eta})\Bigr|_{\substack{\eta=3.9}} >0 
	\quad \text{ and } \quad
	\tfrac{d}{d \eta}B(\tfrac{\eta}{3}, \tfrac{1}{\eta})\Bigr|_{\substack{\eta=3.9}} >0,
\end{equation}
finally confirming that
\begin{equation}
	\eta_{{}_{B,n}}<\eta_{{}_{A,n}}, \quad \forall n \geq 2, \, n \in \N.
\end{equation}

\newpage

\end{document}